\numberwithin{equation}{section}       
\newtheorem{theorem}{Theorem}
\newtheorem{lemma}[theorem]{Lemma}
\newtheorem{proposition}[theorem]{Proposition}
\newtheorem{theoalph}{Theorem}
\theoremstyle{definition}
\newtheorem{definition}[theorem]{Definition}
\newcommand{\RR}{\mathbb{R}}
\newcommand{\NN}{\mathbb{N}}
\newcommand{\ZZ}{\mathbb{Z}}
\newcommand{\QQ}{\mathbb{Q}}
\newcommand{\parameter}{\lambda}
\newcommand{\expansion}[1]{\langle #1 \rangle}
\newcommand{\pcs}{post\nobreakdash-critical set}
\DeclareMathOperator{\interior}{Int}
\renewcommand{\=}{ : = }
\newcommand{\partn}[1]{{\smallskip \noindent \textbf{#1.}}}
\begin{document}
\title[Topological orbit equivalence classes and logistic maps]{Topological orbit equivalence classes and numeration scales of logistic maps}
\author{Mar\'{\i}a Isabel Cortez}
\address{Mar\'{\i}a  Isabel Cortez, Departamento de Matem\'atica y Ciencia de la Computaci\'on, Universidad de Santiago de Chile, Av. Libertador Bernardo O'Higgins 3363, Santiago, Chile.}
\email{maria.cortez@usach.cl}
\author{Juan Rivera-Letelier}
\address{Juan Rivera-Letelier, Facultad de Matem\'aticas, Pontificia Universidad Cat\'olica de Chile,
Avenida Vicu\~na Mackenna 4860, Santiago, Chile}
\email{riveraletelier@mat.puc.cl }
\thanks{M. I. Cortez acknowledges financial support from proyecto Fondecyt
1100318.}
\thanks{J. Rivera-Letelier acknowledges financial support from proyecto Fondecyt
1100922.}
\keywords{orbit equivalence, numeration scale, dimension group, generalized odometer, orbit equivalence, unimodal map}

\maketitle{}
\begin{abstract}
We show that every uniquely ergodic minimal Cantor system is topologically orbit equivalent to the natural extension of a numeration scale associated to a logistic map.
\end{abstract}

\section{Introduction}
In this paper we are mainly interested on orbit equivalence classes of dynamical systems given by a minimal homeomorphism acting on a Cantor set.
Such a system is called \emph{minimal Cantor system}.
Two minimal Cantor systems are (\emph{topologically}) \emph{orbit equivalent} if there exists an orbit-preserving homeomorphism between their phase spaces.
An orbit-preserving homeomorphism
induces an affine homeomorphism between the corresponding spaces of invariant probability measures endowed with the weak* topology.
However, there are minimal Cantor systems which are not orbit equivalent and yet their corresponding spaces of invariant probability measures are affine homeomorphic.

Giordano, Putnam and Skau gave several characterizations of orbit equivalence classes of minimal Cantor systems, see \cite[Theorem 2.2]{GioPutSka95}.
Among other results, they showed that two minimal Cantor systems are orbit equivalent if and only if there is a homeomorphism between their phase spaces that induces an affine homeomorphism between the corresponding spaces of invariant probability measures.



It is thus natural to look for an explicit family of minimal Cantor systems having a representative element from each orbit equivalence class.
We call a family of minimal Cantor systems with this property \emph{full}.
Since the space of invariant probability measures of a minimal Cantor system is determined by its orbit equivalence class, up to an affine homeomorphism, a full family must realize every metrizable Choquet simplex up to an affine homeomorphism, as the space of invariant probability measures of one of its elements.

Downarowicz showed in~\cite{Dow91} that the family of $0$-$1$ Toeplitz flows realizes every metrizable Choquet simplex up to an affine homeomorphism.
Nevertheless, this family is not full, even within uniquely ergodic systems.
For example, no Toeplitz flow can be orbit equivalent to a Sturmian subshift, see~\S\ref{ss:notes}.

In \cite{CorRiv10b} we showed that the family of generalized odometers introduced by Bruin, Keller and St. Pierre in~\cite{BruKelStP97} to study post-critical sets of unimodal maps, realizes every metrizable Choquet simplex up to an affine homeomorphism.
See~\S\ref{ss:generalized odometers} for the definition of the generalized odometer associated to a unimodal map, \cite{Bru03,CorRiv10a} for other results about these systems and \cite{BarDowIwaLia00,BarDowLia02,GraLiaTic95} and references therein for more information on generalized odometers, which are also known as ``numeration scales'' or ``generalized adding machines''.

Thus, the question whether the family of (the natural extensions of) generalized odometers associated to unimodal maps is full, arises naturally.
In this paper we show that this family is full within uniquely ergodic minimal Cantor systems.
Note that a minimal Cantor system which is orbit-equivalent to a uniquely ergodic one is also uniquely ergodic.
The orbit equivalence class of such a system will be called \emph{uniquely ergodic}.
\begin{theoalph}
\label{t:fullness}
Every uniquely ergodic topological orbit equivalence class contains the natural extension of a generalized odometer associated to a unimodal map.
\end{theoalph}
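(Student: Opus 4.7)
My plan is to combine the Giordano--Putnam--Skau orbit equivalence classification with a combinatorial realization construction, along the lines of our earlier work \cite{CorRiv10b}.

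\emph{Step 1 (Reduction).} By \cite[Theorem 2.2]{GioPutSka95}, the orbit equivalence class of a uniquely ergodic minimal Cantor system $(X, T)$ with unique invariant probability measure $\mu$ is completely determined by the countable dense additive subgroup
\[
\Gamma(X, T) \= \{\mu(A) : A \subset X \text{ clopen}\} \subset \RR,
\]
which contains $1$; concretely, $\Gamma(X,T)$ is the image of the dimension group $K^0(X,T)/\mathrm{Inf}$ under the state induced by $\mu$. Thus, to prove Theorem~A, it suffices to realize every such subgroup $\Gamma$ as $\Gamma(\widehat{\Omega}_f, \widehat{T}_f)$ for some unimodal map $f$, where $(\widehat{\Omega}_f, \widehat{T}_f)$ denotes the natural extension of the associated generalized odometer.

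\emph{Step 2 (Computation).} For a unimodal map $f$ whose generalized odometer is uniquely ergodic, I would compute $\Gamma$ explicitly from the Bratteli--Vershik presentation of \cite{BruKelStP97, CorRiv10a}: it is the ascending union of the finitely generated abelian groups associated to each level of the diagram, with generators and inclusion morphisms encoded by the cutting-time sequence of $f$. In this way, specifying $\Gamma$ corresponds, up to admissibility constraints, to specifying a nested sequence of finite-rank subgroups with a prescribed rational structure.

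\emph{Step 3 (Realization).} Given a target countable dense subgroup $\Gamma \subset \RR$ with $1 \in \Gamma$, I would construct inductively a unimodal map $f$ whose cutting times produce a Bratteli diagram whose trace image is $\Gamma$. Enumerating generators of $\Gamma$ and rational approximants of increasing denominator, at each stage one appends finite combinatorial blocks that enlarge the partially built dimension group to include the next generator, while preserving unique ergodicity (via bounded ratios between the masses of consecutive Rokhlin towers) and admissibility as a kneading invariant (via the Milnor--Thurston realization theorem for shift-maximal sequences, as already exploited in \cite{CorRiv10b}).

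The main obstacle is Step~3: the kneading-order constraints restrict which Bratteli diagrams arise from unimodal combinatorics, so reconciling an arbitrary target $\Gamma$ with these constraints requires a careful choice of intermediate cutting times and of filler blocks that contribute trivially to $\Gamma$ without disturbing unique ergodicity. A subordinate point is to verify that the natural extension shares the orbit equivalence class of the underlying generalized odometer, but this is routine since the factor map is almost one-to-one and therefore induces an isomorphism of the relevant dimension group quotients.
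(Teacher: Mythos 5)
Your Steps~1 and~2 match the architecture of the paper's proof: by \cite[Theorem~2.2]{GioPutSka95} and Lemma~\ref{l:unique state}, the orbit equivalence class of a uniquely ergodic $(X,T)$ is determined by the acyclic countable group $\Gamma \subset \RR$ containing~$1$ that arises as the image of the unique state of $G(X,T)$, and the unital ordered group of a generalized odometer is computed from the transition matrices of Bruin's Bratteli--Vershik model exactly as you describe (Proposition~\ref{p:Bratteli} together with Lemma~\ref{l:nexo}). Your ``subordinate point'' about the natural extension is also correctly identified and is handled by Proposition~\ref{p:coborde3}.

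However, your Step~3 is where the entire technical content of the theorem lives, and as written it is a statement of intent rather than a proof: you acknowledge that reconciling an arbitrary $\Gamma$ with the kneading-order constraints ``requires a careful choice of intermediate cutting times and of filler blocks,'' but you do not exhibit such a choice or explain why one always exists. The paper resolves precisely this obstacle by a concrete mechanism with three interlocking parts. First, a multidimensional Euclidean (Jacobi--Perron type) algorithm (\S\ref{ss:algorithm}) turns any base of a finitely generated subgroup of $\RR$ into a direct limit of unimodular ``admissible'' matrices, and an inductive step (Lemma~\ref{l:augmenting generator}) shows how to pass to a larger subgroup containing the next generator of $\Gamma$ by a matrix that is integer, strictly positive, strictly decreasing up to a permutation, with $M(1,1)\ge 5$ and Hilbert-metric diameter $D(M)\le 1$; the diameter bound (via Birkhoff's contraction, Lemmas~\ref{l:showing unique ergodicity} and~\ref{l:definite contraction}) is what guarantees the inverse limit of positive cones is a single ray, hence a unique state and trivial infinitesimals --- this is the precise form of your ``bounded ratios between Rokhlin towers.'' Second, Lemma~\ref{l:basic generate} factors each such matrix into a product of ``basic'' matrices, which are exactly the transition matrices of Bruin's diagram $B_Q$; this is the nontrivial combinatorial step that your ``filler blocks'' gloss over. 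Third, the resulting kneading map $Q$ must be shown admissible, i.e., realizable by a logistic map: the paper does not invoke Milnor--Thurston directly but verifies Hofbauer's condition $Q(k+j)\ge Q(Q(Q(k))+j)$ (Part~1 of Proposition~\ref{p:projection-to-pcs}) using the ``increasing modulo intervals'' structure built into the construction. Finally, note that the construction of Proposition~\ref{p:infinitely generated} requires $\Gamma \not\subset \QQ$, so the rational case must be (and is) treated separately via ordinary odometers; your proposal does not address this dichotomy. Without these ingredients the realization step remains open, so the proposal as it stands has a genuine gap.
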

The family formed by the usual odometers and by the Denjoy systems is also full within uniquely ergodic minimal Cantor systems, see~\cite[Corollary~2]{GioPutSka95} and \cite{PutSchSka86}.
However, this family is not full because it only contains uniquely ergodic systems, in contrast with the family of generalized odometers associated to unimodal maps which realizes every metrizable Choquet simplex up to an affine homeomorphism.

The notion of orbit equivalence applies without change to more general group actions.
Recently, Giordano, Matui, Putnam and Skau showed that for each integer~$d \ge 2$, every minimal continuous $\ZZ^d$-action on a Cantor set is orbit equivalent to a minimal Cantor system, see~\cite{GioMatPutSka08} and~\cite{GioMatPutSka10}.
As a corollary of this result and of Theorem~\ref{t:fullness}, we get that every uniquely ergodic minimal continuous $\ZZ^d$-action on a Cantor set is orbit equivalent to the natural extension of a generalized odometer associated to a unimodal map.

We now state a version of Theorem~\ref{t:fullness} in terms of the \textit{logistic family} of interval maps~$(f_\parameter)_{\parameter \in (0, 4]}$, defined for a parameter $\parameter \in (0, 4]$ by
\begin{center}
\begin{tabular}{rcl}
$f_\parameter : [0, 1]$ & $\to$ & $[0, 1]$ \\
$x$ & $\mapsto$ & $\parameter x (1 - x).$
\end{tabular}
\end{center}
For each $\parameter \in (0, 4]$ the derivative of~$f_{\parameter}$ vanishes precisely at $x = 1/2$.
We call $x = 1/2$ the \textit{critical point} of~$f_\parameter$ and its $\omega$\nobreakdash-limit set is called the \textit{\pcs{}} of~$f_{\parameter}$.
It is a compact set that is forward invariant by~$f_{\parameter}$.
\begin{theoalph}
\label{t:semi-equivalence}
Let~$X$ be a Cantor set and let~$T : X \to X$ be a minimal and uniquely ergodic  homeomorphism.
Then there is~$\parameter \in (0, 4]$ such that the \pcs{} $X_\parameter$ of the logistic map~$f_\parameter$ is a Cantor set, the restriction of~$f_\parameter$ to~$X_\parameter$ is minimal and such that there is a continuous orbit preserving map~$h : X \to X_\parameter$ whose inverse is defined outside the backward orbit of the critical point~$x = 1/2$ of~$f_\parameter$.
\end{theoalph}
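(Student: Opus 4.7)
The plan is to combine Theorem~\ref{t:fullness} with a realization step in the logistic family and with the canonical factor map from the natural extension of a generalized odometer onto the associated post-critical dynamics. First, I would apply Theorem~\ref{t:fullness} to $(X, T)$ to obtain a unimodal map $g$ together with an orbit preserving homeomorphism $\varphi \colon X \to \widehat{O}_g$, where $\widehat{O}_g$ denotes the natural extension of the generalized odometer of $g$.

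The next step is to transport the dynamics from $g$ to a logistic map. Since the generalized odometer of a unimodal map depends only on its combinatorial (kneading) type, it suffices to find $\parameter \in (0, 4]$ such that $f_{\parameter}$ has the same kneading invariant as $g$. The existence of such a $\parameter$ follows from the monotonicity of the kneading invariant along $\parameter \mapsto f_{\parameter}$, combined with admissibility of the kneading sequences produced by Theorem~\ref{t:fullness}. For this $\parameter$, the \pcs{} $X_{\parameter}$ is then a Cantor set, the restriction $f_{\parameter}|_{X_{\parameter}}$ is minimal, and there is a canonical topological conjugacy $\psi \colon \widehat{O}_g \to \widehat{O}_{f_{\parameter}}$.

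The proof is then completed by the canonical factor map $\pi \colon \widehat{O}_{f_{\parameter}} \to X_{\parameter}$, which intertwines the natural extension dynamics with $f_{\parameter}|_{X_{\parameter}}$ and identifies precisely those pairs of odometer points that differ only in their combinatorial choice of branch (left or right of $\tfrac{1}{2}$) at some preimage of the critical point. This map is continuous and orbit preserving, and is injective on the complement of the backward orbit of $\tfrac{1}{2}$ under $f_{\parameter}$. Setting $h \= \pi \circ \psi \circ \varphi$ yields the desired map. The main obstacle I anticipate is the realization step: verifying that the kneading sequence produced by Theorem~\ref{t:fullness} is admissible for the logistic family, which rests on the intermediate value property for the kneading invariant together with the description of generalized odometers in~\cite{BruKelStP97, CorRiv10b}.
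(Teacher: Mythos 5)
Your overall route is the same as the paper's: realize the dimension group of $(X,T)$ by a generalized odometer, realize the associated kneading data inside the logistic family, and compose the resulting orbit equivalence with the factor map $\pi$ onto the \pcs{}. The realization step you flag as the main obstacle is indeed present in the paper (Part~1 of Proposition~\ref{p:projection-to-pcs}, via the Hofbauer--Bruin admissibility criterion for kneading maps, \cite{Bru95,Hof80}), and your concern about admissibility is legitimate.

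The genuine gap is in the step you take for granted. You describe $\pi$ as a ``canonical factor map'' that identifies \emph{precisely} the pairs of odometer points differing by a branch choice at a preimage of the critical point, and conclude that it is injective off $\bigcup_{n \ge 0} f_\parameter^{-n}(1/2)$. This is not a general property of $\pi : \Omega_Q \to X_\parameter$: for an arbitrary unimodal map whose \pcs{} is a minimal Cantor set, $\pi$ may identify points outside the backward orbit of the critical point. The injectivity you need is exactly the content of Part~2 of Proposition~\ref{p:projection-to-pcs}, which rests on \cite[Lemma~11]{CorRiv10a} and requires verifying, for the \emph{specific} kneading map $Q$ produced by the construction, the strengthened admissibility condition~\eqref{e:improved admissibility} together with a separation property of the quantities $q(T_Q^m(x))$; both are available only because $Q$ is ``increasing modulo intervals,'' a feature engineered into the construction through Part~1 of Proposition~\ref{p:infinitely generated} and Theorem~\ref{t:almost-fullness}. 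This is also why invoking Theorem~\ref{t:fullness} as a black box cannot work: it hands you \emph{some} generalized odometer in the orbit equivalence class with no control over its kneading map, so you can verify neither the admissibility needed for your realization step nor the injectivity of $\pi$. The paper therefore proves Theorems~\ref{t:fullness} and~\ref{t:semi-equivalence} simultaneously from the explicit construction, keeping the combinatorics of $Q$ in hand throughout; note also that the construction of Theorem~\ref{t:almost-fullness} only covers the case where the state group is not contained in $\QQ$, so the rational (odometer) case must be treated separately, as the paper does with an explicit non-decreasing $Q$.
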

Note that the map~$h$ in the previous theorem is automatically onto because~$f_\parameter$ is minimal on~$X_\parameter$.
However, $h$ is not likely to be a homeomorphism, because logistic maps are usually not injective on their \pcs{s}.

One of the main ingredients in the proofs of Theorem~\ref{t:fullness} and of Theorem~\ref{t:semi-equivalence} is the unital ordered group associated to a minimal Cantor system introduced by Herman, Putnam and Skau in \cite{HerPutSka92}.
Giordano, Putnam and Skau showed in~\cite[Theorem 2.2]{GioPutSka95} that this unital ordered group, modulo its infinitesimal subgroup, determines the orbit equivalence class of the corresponding minimal Cantor system.
Thus, to show that a family of minimal Cantor systems is full within uniquely ergodic systems, it is enough to show that this family realizes every acyclic countable additive subgroup of~$\RR$, up to infinitesimals, see~\S\ref{ss:complete invariant} and Lemma~\ref{l:unique state} in~\S\ref{ss:dimension groups}.
To do this we write each such group as a direct limit of non-negative matrices, which are essentially given by the iteration of a multidimensional Euclidean algorithm.\footnote{It turns out this algorithm is somewhat similar to the (homogeneous) Jacobi-Perron algorithm, see for example~\cite{Ber71,Sch73}.}

We obtain as a consequence a canonical representation of a given finitely generated subgroup of~$\RR$ as the direct limit of unimodular matrices, see Proposition~\ref{p:finitely generated} in~\S\ref{p:finitely generated} and compare with the original result of Riedel in~\cite{Rie81}.
We complete the proof of Theorem~\ref{t:fullness} and Theorem~\ref{t:semi-equivalence} by showing that the matrices appearing in the direct limit can be represented as the transition matrices of a certain Bratteli-Vershik system which is conjugated to a generalized odometer associated to a unimodal map.
This Bratteli-Vershik system was introduced by Bruin in~\cite{Bru03}.

\subsection{Notes and references}
\label{ss:notes}
See~\cite{GlaWei95, Put10} for other approaches to the results of Giordano, Putnam and Skau in~\cite{GioPutSka95}, on topological orbit equivalence for minimal Cantor systems.

Since the maximal equicontinuous factor of a Toeplitz flow is an odometer, see for example~\cite{Dow05}, it follows that the unital ordered group associated to a Toeplitz flow contains as a subgroup an acyclic subgroup of~$\QQ$, see for example~\cite[Proposition~3.1]{GlaWei95}.
So by~\cite[Theorem~2.2]{GioPutSka95} it follows that the family of Toeplitz flows is not full within uniquely ergodic minimal Cantor systems.
For example, no Toeplitz flow can be orbit equivalent to a Sturmian subshift, see for example~\cite[Proposition~3.4]{DarDurMaa00}.
See also~\cite[\S4.1]{GjeJoh00} for further results on the unital ordered group associated to a Toeplitz flow.

The generalized odometers we construct in Theorem~\ref{t:fullness} are such that their associated unital ordered group has a trivial infinitesimal subgroup.
So, if the minimal Cantor system~$(X, T)$ is such that its unital ordered group has a trivial infinitesimal subgroup, then the natural extension of the generalized odometer is strong orbit equivalent to~$(X, T)$, see~\cite[Theorem~2.1]{GioPutSka95}.

It is not clear to us if in Theorem~\ref{t:semi-equivalence} is possible to choose~$\parameter \in (0, 4]$ in such a way that the natural extension of~$(X_\parameter, f_\parameter)$ is orbit equivalent to~$(X, T)$. 

In \cite{Shu05}, Shultz introduces a dimension group associated to a piecewise monotone map~$f$ acting on the unit interval~$[0, 1]$.
This construction involves the dynamics of~$f$ on the whole interval~$[0,1]$.
In contrast, we only consider the dynamics of a special class of unimodal maps acting on~$[0, 1]$, but restricted to a strictly smaller invariant set.
It is not clear to us if there is a direct relation between the corresponding dimension groups. 

\subsection{Organization}
\label{ss:organization}
In \S\ref{s:preliminaries} we introduce some notations and recall some basic facts, including the concept of dimension group (\S\ref{ss:dimension groups}) and its relation with (non-invertible) minimal Cantor systems (\S\S\ref{ss:complete invariant}, \ref{ss:dynamical ordered group}).

In \S\ref{s:algorithm} we show that every finitely generated subgroup~$\Gamma$ of~$\RR$ of rank at least~2, endowed with the order structure induced by the usual order structure of~$\RR$, is isomorphic to a direct limit of a certain class of unimodular matrices that we call ``admissible'' (Proposition~\ref{p:finitely generated} in~\S\ref{ss:finitely generated}).
These matrices are defined by the iteration of a multidimensional Euclidean algorithm defined in~\S\ref{ss:algorithm}, having as an input a positive real vector whose coordinates form a base~$\Gamma$.

In \S\ref{s:infinitely generated} we show how to use the algorithm defined in~\S\ref{ss:algorithm} to represent a countable additive subgroup of~$\RR$ that contains~$1$, but is not contained in~$\QQ$, as a direct limit of a certain class of matrices we call ``basic'', see Proposition~\ref{p:infinitely generated}.
These matrices appear naturally as transition matrices of the Bratteli-Vershik system associated to a generalized odometer associated to a unimodal map, see~\S\ref{ss:generalized odometers} for the definition of these objects.
We deduce Theorem~\ref{t:fullness} and Theorem~\ref{t:semi-equivalence} from Proposition~\ref{p:infinitely generated} and known facts in~\S\ref{ss:logistic}.

\subsection{Acknowledgements}
We would like to thank the referee for reading carefully the paper and making suggestions that helped improve the exposition.
\section{Definitions and Background}
\label{s:preliminaries}
The purpose of this section is to fix some notations and terminology and to recall some basic results that will be used in the rest of the paper.

We denote by~$\NN = \{ 1, 2, \ldots \}$ the set of strictly positive integers.
We use the interval notation to denote subsets of~$\ZZ$, so for $n, m \in \ZZ$
$$ [n, m]
=
\begin{cases}
\{ n, n + 1, \ldots, m \} & \text{if } n \le m;
\\
\emptyset & \text{if } n \ge m + 1.
\end{cases} $$
For $i \in \NN \cup \{ 0 \}$ and $J \subset \NN$ we put~$i + J = \{ i + j : j \in J \}$.

Given $x \in \RR$ we denote by~$[x]$ the integer part of~$x$ and by $\{ x \} \= x - [x] \in [0, 1)$ its fractional part.

\subsection{Linear algebra}
\label{ss:linear-algebra}
Given a finite set~$V$ and a vector~$\vec{x} \in \RR^V$, for each~$v \in V$ we denote by~$x_v$ the corresponding coordinate of~$\vec{x}$. 
On the other hand we denote by~$\vec{e}_v$ the vector in~$\RR^V$ having all of its coordinates equal to~$0$, except for the coordinate corresponding to~$v$ which is equal to~$1$.

Unless otherwise stated, all the vectors we consider are column vectors.

Given a finite subset~$V$ of~$\ZZ$ we say that a vector~$(x_v)_{v \in V} \in \RR^V$ is \emph{non-increasing} (resp. \emph{strictly decreasing}) if for every $v, v' \in V$ such that $v' \ge v$ we have $x_{v'} \le x_v$ (resp. $x_{v'} < x_v$).

Given finite sets~$I$ and~$I'$, a \emph{$I \times I'$ matrix} means a real matrix indexed by~$I \times I'$.
For~$(i, i') \in I \times I'$ and a $I \times I'$ matrix~$M$ we denote by~$M(i, i')$ the corresponding coefficient of~$M$ and by~$M(\cdot, i')$ the corresponding column of~$M$.
We say that such a matrix~$M$ is \textit{strictly positive} (resp. \textit{non-negative}, \textit{integer}) if each of its coefficients has the same property.

Recall that a square matrix with integer coefficients is \emph{unimodular} if its inverse is defined and has integer coefficients.
\subsection{Additive subgroups of~$\RR$}
\label{ss:real-groups}

Let~$\Gamma$ be a finitely generated additive subgroup of~$\RR$.
Then~$\Gamma$ is free and therefore the elements of each base of~$\Gamma$ are rationally independent. 
On the other hand, if~$\Gamma'$ is a subgroup of~~$\Gamma$, then~$\Gamma'$ is finitely generated, free and of rank less than or equal to that of~$\Gamma$.

\subsection{Ordered groups}
\label{ss:ordered-groups}
Let~$G$ be an Abelian group written additively.
A \emph{positive cone} of~$G$ is a subset~$G^+$ verifying,
$$ (G^+) + (G^+) \subseteq G^+,
(G^+) + (-G^+) = G
\text{ and }
(G^+) \cap (-G^+) = \{ 0 \}. $$
An \emph{ordered group} is a pair $(G,G^+)$ such that~$G$ is a group and~$G^+$ is a positive cone of~$G$.
A positive cone~$G^+$ of a group~$G$ defines a partial order~$\le$ on~$G$ defined for~$a, b \in G$ by $a\le b$ if $b-a\in G^+$.

The set
$$ \RR^+ \= \{ x \ge 0 : x \in \RR \}. $$
is a positive cone of the additive group~$\RR$ which induces the usual order on~$\RR$.

Given ordered groups $(G,G^+)$ and $(H,H^+)$ we say that a group homomorphism $\phi:G\to H$ is \emph{positive} if $\phi(G^+)\subseteq H^+$.
An \emph{isomorphism of ordered groups} between $(G,G^+)$ and $(H,H^+)$ is a group isomorphism $\phi : G \to H$ such that both, $\phi$ and~$\phi^{-1}$ are positive.

An \emph{order  unit} of an ordered group~$(G,G^+)$ is an element $u\in G^+$ such that for every $g\in G$ there exists $n\in \NN$ such that $g\leq n u$.
A \emph{unital ordered group} is a triple $(G,G^+,u)$ such that $(G,G^+)$
is an ordered group and $u$ is an order unit.
A homomorphism between two unital ordered groups $(G,G^+,u)$ and $(H,H^+,v)$ is
a positive homomorphism $\phi : G \to H$ such that $\phi(u) = v$.

A \emph{state} of an ordered group~$(G, G^+)$ is a non-zero positive homomorphism from $(G, G^+)$ to $(\RR, \RR^+)$.
The \emph{infinitesimal subgroup} $\inf{G}$ of~$(G, G^+)$ is
$$ \inf(G) \= \{ g \in G : \phi(g) = 0 \text{ for every state~$g$ of~$G$} \}. $$
The quotient~$G / \inf(G)$ has a natural structure of ordered group given by the positive cone
$$ \left( G / \inf(G) \right)^+
\=
\{ [a] : a \in G^+ \}. $$
If~$u$ is an order unit of~$G$, then the projection of~$u$ in~$G/\inf(G)$ is an order unit of~$(G/\inf(G), (G/\inf(G))^+)$.

Given $m \in \NN$, the set
$$ (\ZZ^m)^+
\=
\left\{ (v_1,\cdots,v_m) \in \ZZ^m : \text{ for all } i \in \{ 1, \ldots, m \}, v_i\geq 0 \right\}. $$
is a positive cone of~$\ZZ^m$.
Note that for~$m, m' \in \NN$ a $m \times m'$ matrix~$A$ with integer coefficients induces a positive homomorphism from~$(\ZZ^{m'}, (\ZZ^{m'})^+)$ to $(\ZZ^{m}, (\ZZ^{m})^+)$ if and only if all of its coefficients are non-negative.
\subsection{Direct limits of ordered groups}
\label{ss:direct limits}
Let~$(G_n)_{n = 1}^\infty$ be a sequence of groups and for each~$n \in \NN$ let~$\phi_n : G_n \to G_{n + 1}$ be a group homomorphism.
Given~$n \in \NN$ and $v \in G_n$ we denote by~$(v, n)$ the corresponding element of the disjoint union~$\bigsqcup_{n = 1}^\infty G_n$.
Let~$\sim$ be the equivalence relation defined on $\bigsqcup_{n = 1}^\infty G_n$ by~$(v, n) \sim (v', n')$ if there is $m \ge \max \{ n, n' \}$ such that
$$ \phi_{m-1} \circ \cdots \circ \phi_{n} v
=
\phi_{m - 1} \circ \cdots \circ \phi_{n'} v'. $$
Then the quotient of~$\bigsqcup_{n = 1}^\infty G_n$ by~$\sim$ has a natural group structure and the resulting group is a direct limit of~$\left( G_n, \phi_n \right)_{n = 1}^\infty$ in the category of groups.
We denote this group by~$\varinjlim \left( G_n, \phi_n \right)_{n = 1}^\infty$ and for~$(v, n) \in \bigsqcup_{n = 1}^\infty G_n$ we denote by~$[v, n]$ its equivalence class for~$\sim$.

Let~$\left( (G_n, G_n^+) \right)_{n = 1}^\infty$ be a sequence of ordered groups and for each~$n \in \NN$ let~$\phi_n : G_n \to G_{n + 1}$ be a positive homomorphism.
Then the group~$H \= \varinjlim ( G_n, \phi_n)_{n = 1}^\infty$ together with
$$ H^+ \= \{ [v, n] : n \in \NN, v \in G_n^+ \} $$
forms an ordered group which is a direct limit of $\left( (G_n, G_n^+), \phi_n \right)_{n = 1}^\infty$ in the category of ordered groups.
By abuse of language we call $(H, H^+)$ \emph{the direct limit of~$\left( (G_n, G_n^+), \phi_n \right)_{n = 1}^\infty$} and we denoted it by
$$ \varinjlim \left( (G_n, G_n^+), \phi_n \right)_{n = 1}^\infty. $$

\subsection{Dimension groups}
\label{ss:dimension groups}
For references to this section see \cite{Eff81,Goo86}.
We say that an ordered group~$(G, G^+)$ is \emph{unperforated} if for~$g \in G$ and $n \in \NN$ the property~$n g \in G^+$ implies~$g \in G^+$.
On the other hand, we say~$(G, G^+)$ has the \emph{Riesz interpolation property} if for~$a, a', b, b' \in G$ satisfying
$$ a \le b, a \le b', a' \le b \text{ and } a' \le b' $$
there is $c \in G$ such that~$a \le c \le b$ and~$a' \le c \le b'$.
A countable ordered group $(G, G^+)$ is a \emph{dimension group} if it is unperforated and has the Riesz interpolation property.




The following lemma characterizes dimension groups with a unique state up to a scalar factor, up to infinitesimals.
It is a direct consequence of the definitions.
\begin{lemma}
\label{l:unique state}
If~$\Gamma$ is a countable additive subgroup of~$\RR$, then~$\Gamma^+ \= \Gamma \cap \RR^+$ is a positive cone of~$\Gamma$ and $(\Gamma, \Gamma^+)$ is a dimension group.
Furthermore, the inclusion of~$\Gamma$ in~$\RR$ is the unique state of~$(\Gamma, \Gamma^+)$ up to a scalar factor and the infinitesimal group of~$(\Gamma, \Gamma^+)$ is trivial.

Conversely, if~$(G, G^+)$ is a dimension group having a unique state~$\phi$ up to a scalar factor, then~$\phi(G)$ is a countable subgroup of the additive group~$\RR$ and~$\phi$ induces a positive homomorphism between $\left( G/\inf(G), (G/\inf(G))^+ \right)$ and~$(\phi(G), \phi(G) \cap \RR^+)$.
\end{lemma}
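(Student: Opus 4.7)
The plan is to unpack the definitions in each direction; the only substantive step is showing that every state on a countable subgroup of~$\RR$ is a scalar multiple of the inclusion, and the main (minor) obstacle is handling the case $g/g_0 \in \QQ$ separately in the squeeze argument below.

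For the first direction I would first verify the three positive-cone axioms for~$\Gamma^+ \= \Gamma \cap \RR^+$, all inherited directly from~$\RR^+$. Unperforation follows because division by a positive integer preserves sign in~$\RR$, and the Riesz interpolation property holds by taking $c \= \max(a,a')$ in the definition, since one of $a,a'$ is already an upper bound of both. Together with the countability of~$\Gamma$ this makes $(\Gamma,\Gamma^+)$ a dimension group. For the uniqueness of the state, given a state $\psi : \Gamma \to \RR$ I would choose (which is always possible when $\psi \neq 0$, as every element of~$\Gamma$ lies in~$\Gamma^+$ or~$-\Gamma^+$) a $g_0 \in \Gamma^+ \setminus \{0\}$ with $\psi(g_0) > 0$ and set $c \= \psi(g_0)/g_0$. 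For every $g \in \Gamma$, $p \in \ZZ$, $q \in \NN$, positivity of~$\psi$ applied to whichever of the inequalities $p g_0 \le q g$ or $p g_0 \ge q g$ holds in~$\Gamma$ gives the corresponding inequality between $(p/q)\psi(g_0)$ and $\psi(g)$. Sandwiching $g/g_0$ between rationals from above and below (and handling the exceptional case $g/g_0 = p/q \in \QQ$ by the direct equality $q\psi(g) = p\psi(g_0)$) then forces $\psi(g) = cg$, so $\psi$ is a positive scalar multiple of the inclusion $\iota : \Gamma \hookrightarrow \RR$. Since $\iota$ is itself an injective state, $\inf(\Gamma) = \{ 0 \}$ is immediate.

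For the converse direction, $G$ is countable because every dimension group is countable by definition, so $\phi(G) \subset \RR$ is a countable additive subgroup. Every state vanishes on $\inf(G)$ by the very definition of the infinitesimal subgroup, so $\phi$ descends to a group homomorphism $\bar\phi : G/\inf(G) \to \phi(G)$, and its positivity with respect to the cones $(G/\inf(G))^+$ and $\phi(G) \cap \RR^+$ follows immediately from $\phi(G^+) \subseteq \RR^+$, which holds because $\phi$ is a state.
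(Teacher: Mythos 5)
Your proof is correct, and since the paper offers no proof of this lemma (it is dismissed as ``a direct consequence of the definitions''), your writeup simply supplies the standard details the authors had in mind: the total order on $\Gamma\subset\RR$ gives the cone axioms, unperforation, and interpolation via $\max(a,a')$, and the rational squeeze $pg_0\le qg$ forces any state to be a scalar multiple of the inclusion. Nothing to object to.
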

We will use the following lemma to represent dimension groups with a unique state up to a scalar factor, modulo infinitesimals.

Given a sequence~$(d_n)_{n = 1}^\infty$ in~$\NN$ and for each~$n \in \NN$ a non-negative $d_n \times d_{n + 1}$ matrix~$A_n$, we define the inverse limit
$$ \varprojlim \left( (\RR^+)^{d_n}, A_n \right)_{n = 1}^\infty, $$
as the subset of~$\prod_{n = 1}^\infty (\RR^+)^{d_n}$ of all those~$( \vec{x}^{(n)} )_{n = 1}^\infty$ such that for each~$n \in \NN$ we have $\vec{x}^{(n)} = L_n \vec{x}^{(n + 1)}$. 
\begin{lemma}
\label{l:nexo}
Let~$(d_n)_{n = 1}^\infty$ be a sequence in~$\NN$ and for each~$n \in \NN$ let~$L_n$ be a $d_{n + 1} \times d_{n}$ matrix with non-negative integer coefficients and consider the ordered group
$$ (G, G^+)
\=
\varinjlim \left( (\ZZ^{d_n}, (\ZZ^{d_n})^+), L_n \right)_{n = 1}^\infty. $$
Assume there is~$(\vec{x}^{(n)})_{n = 1}^\infty \in \prod_{n = 1}^\infty (\RR^+)^{d_n}$ such that
$$ \varprojlim \left( (\RR^+)^{d_n}, L_n^T \right)_{n = 1}^\infty
=
\left\{ \left( \lambda  \vec{x}^{(n)} \right)_{n = 1}^\infty : \lambda \ge 0 \right\} $$
and let~$\Gamma$ be the additive subgroup of~$\RR$ generated by the coordinates of each of the vectors in $(\vec{x}^{(n)})_{n = 1}^\infty$.
Then~$(G, G^+)$ is a dimension group and the function~$\tilde{\phi} : \bigsqcup_{n = 1}^{\infty} \ZZ^{d_n} \to \RR$ defined by~$\tilde{\phi}((\vec{v}, n)) = \langle \vec{v}, \vec{x}^{(n)} \rangle$ induces a function
$$ \phi : (G, G^+) \to (\RR, \RR^+) $$
which is the unique state of~$(G, G^+)$ up to a scalar factor.
In particular, $\phi$ induces an isomorphism of ordered groups between $(G/\inf(G), (G/\inf(G))^+)$ and $(\Gamma, \Gamma \cap \RR^+)$.
\end{lemma}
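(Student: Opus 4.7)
The plan is to establish the lemma in three stages: first descend $\tilde{\phi}$ to a state $\phi$ of $(G,G^+)$; second show that $(G,G^+)$ is a dimension group with $\phi$ as its unique state up to scalar; and finally apply Lemma~\ref{l:unique state} to obtain the isomorphism.

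For the first stage, the hypothesis that $(\vec{x}^{(n)})_{n=1}^\infty$ lies in $\varprojlim ( (\RR^+)^{d_n}, L_n^T )_{n=1}^\infty$ means $\vec{x}^{(n)} = L_n^T \vec{x}^{(n+1)}$ for every $n$. Hence for every $\vec{v}\in\ZZ^{d_n}$,
\[
\langle \vec{v}, \vec{x}^{(n)} \rangle
= \langle \vec{v}, L_n^T \vec{x}^{(n+1)} \rangle
= \langle L_n \vec{v}, \vec{x}^{(n+1)} \rangle,
\]
so $\tilde{\phi}$ is constant on equivalence classes of $\sim$ and descends to a group homomorphism $\phi : G \to \RR$. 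Since every $\vec{x}^{(n)}$ has non-negative entries, $\phi$ is positive, and it is non-zero because the inverse limit is a non-trivial ray. Next, each $(\ZZ^{d_n},(\ZZ^{d_n})^+)$ is a simplicial ordered group, hence a dimension group, and a direct limit in the category of ordered groups of dimension groups is again a dimension group (see \cite{Eff81, Goo86}); thus $(G,G^+)$ is a dimension group.

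For the uniqueness, let $\psi$ be an arbitrary state of $(G, G^+)$ and, for each $n \in \NN$ and $i\in\{1,\dots,d_n\}$, put $y^{(n)}_i \= \psi([\vec{e}_i, n]) \ge 0$. The identity $[\vec{e}_i, n] = [L_n \vec{e}_i, n+1]$ combined with linearity of $\psi$ gives
\[
y^{(n)}_i
= \sum_{j=1}^{d_{n+1}} L_n(j,i)\, y^{(n+1)}_j
= (L_n^T \vec{y}^{(n+1)})_i,
\]
so $(\vec{y}^{(n)})_{n=1}^\infty \in \varprojlim ( (\RR^+)^{d_n}, L_n^T)_{n=1}^\infty$. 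By hypothesis this inverse limit is the single ray through $(\vec{x}^{(n)})_{n=1}^\infty$, so $\vec{y}^{(n)} = \lambda \vec{x}^{(n)}$ for some $\lambda \ge 0$, with $\lambda > 0$ since $\psi \neq 0$. Because the classes $\{[\vec{e}_i, n] : n \in \NN,\ i \le d_n\}$ generate $G$, this forces $\psi = \lambda \phi$.

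It remains to identify $\phi(G) = \Gamma$ (the inclusion $\phi(G)\subseteq\Gamma$ is immediate from the definition of $\Gamma$, and the reverse follows from $\phi([\vec{e}_i, n]) = x^{(n)}_i$) and to invoke Lemma~\ref{l:unique state} to get the ordered group isomorphism between $(G/\inf(G), (G/\inf(G))^+)$ and $(\Gamma, \Gamma\cap\RR^+)$. The main subtlety I expect is upgrading the induced map from a positive homomorphism, as provided by Lemma~\ref{l:unique state}, to an isomorphism of ordered groups; this amounts to showing that the positive cone of $G/\inf(G)$ coincides with the preimage of $\Gamma\cap\RR^+$, which is a standard consequence of $(G,G^+)$ being an unperforated dimension group whose unique state (up to scalar) is $\phi$.
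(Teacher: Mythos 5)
Your proposal is correct and follows essentially the same route as the paper's proof: the same computation showing $\tilde{\phi}$ is constant on equivalence classes, the same citation of Effros/Goodearl for the dimension-group property, the same argument producing an element of $\varprojlim \left( (\RR^+)^{d_n}, L_n^T \right)_{n = 1}^\infty$ from an arbitrary state to get uniqueness, and the same final appeal to Lemma~\ref{l:unique state}. If anything you are slightly more explicit than the paper about the last step (upgrading the induced positive homomorphism onto $(\Gamma, \Gamma \cap \RR^+)$ to an order isomorphism), which the paper leaves entirely to Lemma~\ref{l:unique state}.
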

\begin{proof}
That~$(G, G^+)$ is a dimension group is shown in~\cite[Theorem~3.1]{Eff81}.

To show that~$\tilde{\phi}$ induces a function defined on~$(G, G^+)$, observe that if~$[\vec{v}, n]$ and~$[\vec{v}', n']$ are equivalent elements in~$\bigsqcup_{n = 1}^\infty \ZZ^{d_n}$ with~$n' \ge n$, then
$$ \langle \vec{v}', \vec{x}^{(n')} \rangle
=
\langle L_{n' - 1} \cdots L_n \vec{v}, \vec{x}^{(n')} \rangle
=
\langle \vec{v}, L_n^T \cdots L_{n' - 1}^T \vec{x}^{(n')} \rangle
=
\langle \vec{v}, \vec{x}^{(n)} \rangle. $$
This shows~$\tilde{\phi}([\vec{v}, \vec{x}^{(n)}]) = \tilde{\phi}([\vec{v}', \vec{x}^{(n')}])$, as wanted.

It remains to show that~$\phi$ is the unique state of~$(G, G^+)$ up to a scalar factor, because the rest of the assertions follow from Lemma~\ref{l:unique state}.
Let~$\psi$ be a state of~$(G, G^+)$ and for each~$n \in \NN$ put
$$ \vec{w}^{(n)}
\=
\left( \psi([\vec{e}_1, n]), \ldots, \psi([\vec{e}_{d_n}, n]) \right)
\in
(\RR^+)^{d_n}, $$
so that for each~$[\vec{v}, n] \in G$ we have $\psi([\vec{v}, n]) = \langle \vec{v}, \vec{w}^{(n)} \rangle$.
Then
$$ \vec{w}^{(n)}
=
( \psi([L_n\vec{e}_1, n + 1]), \ldots, \psi([L_n\vec{e}_{d_n}, n + 1]) )
=
L_n^T \vec{w}^{(n + 1)}, $$
so~$(\vec{w}^{(n)})_{n = 1}^\infty$ is an element of~$\varprojlim ((\RR^+)^{d_n}, L_n^T )_{n = 1}^\infty$.
Our hypotheses implies that there is~$\lambda \ge 0$ such that $(\vec{w}^{(n)})_{n = 1}^\infty = (\lambda \vec{x}^{(n)})_{n = 1}^\infty$.
Thus~$\psi = \lambda \phi$.
\end{proof}

\subsection{The unital ordered group of a minimal Cantor system}
\label{ss:complete invariant}
In this section, as well as in the next section, a \emph{minimal Cantor system} is a pair~$(X, T)$, where~$X$ is a Cantor set and~$T : X \to X$ is a continuous and surjective map whose action on~$X$ is minimal.
If in addition~$T$ is a homeomorphism we call $(X, T)$ a \emph{homeomorphic minimal Cantor system}.\footnote{In the introduction we used ``minimal Cantor system'' to refer to what we call in this section, as well as in the next one, a ``homeomorphic minimal Cantor system''.}

We denote by $C(X,\ZZ)$ the space of all continuous functions defined on~$X$ and taking values in~$\ZZ$.
We will denote by~$\mathds{1}_X \in C(X, \ZZ)$ the constant function equal to~$1$ on~$X$.
Since~$X$ has a countable base of clopen sets, the space $C(X,\ZZ)$ is countable.
Thus $C(X,\ZZ)$ is a countable Abelian group with the usual
addition of functions.

The subgroup of \emph{coboundaries} of $C(X,\ZZ)$ is defined as
$$ \partial_TC(X,\ZZ)
\=
\left\{ f - f\circ T : f\in C(X,\ZZ) \right\}. $$
The quotient group $C(X,\ZZ)/\partial_TC(X,\ZZ)$ is denoted by
$K^0(X,T)$ and the subset of~$K^0(X, T)$ of all the equivalence classes represented by a non-negative function is denoted by~~$K^0(X, T)^+$.
The equivalence class in $C(X,\ZZ)/\partial_TC(X,\ZZ)$ of a function~$f$ in~$C(X, \ZZ)$ is denoted by~$[f]$.
Then the triple
 $$G(X,T)
\=
(K^0(X,T),K^0(X,T)^+,[\mathds{1}_X]),$$
is a unital ordered group, see \cite[Proposition~5.1]{HerPutSka92}.\footnote{This result is only stated in the case where~$T$ is a homeomorphism, but the proof works in the case when~$T$ is not injective.}
Below we show that~$G(X, T)$ is canonically isomorphic as a unital ordered group to the corresponding group associated to the natural extension of~$(X, T)$, see Proposition~\ref{p:coborde3} in~\S\ref{ss:dynamical ordered group}.
It thus follows from~\cite[Theorem~5.4]{HerPutSka92} that $G(X, T)$ is actually a unital dimension group.
We call~$G(X, T)$ \emph{the unital ordered group associated to $(X,T)$}.
A dimension group is the unital ordered group associated to a (homeomorphic) minimal Cantor system if and only if it is acyclic and simple, see for example~\cite[Corollary~6.3]{HerPutSka92}.
The quotient group~$G(X, T)/ \inf G(X, T)$ has a natural structure of unital ordered group, see~\S\ref{ss:ordered-groups}, which we denote just by~$G(X, T)/ \inf G(X, T)$.

It follows from the definitions that if~$(X, T)$ and~$(X', T')$ are minimal Cantor systems and~$h: X \to X'$ is a homeomorphism such that~$h \circ T = T' \circ h$, then the linear map from $C(X, \ZZ)$ to~$C(X', \ZZ)$ defined by~$f \mapsto f \circ h$ induces an isomorphism of unital ordered groups between~$G(X, T)$ and~$G(X', T')$.

Giordano, Putnam and Skau show in \cite[Theorem~2.2]{GioPutSka95} that for a homeomorphic minimal Cantor system the group $G(X,T)/\inf(G(X,T))$ determines the orbit equivalence class of~$(X,T)$.
\subsection{Unital ordered groups and natural extensions}
\label{ss:dynamical ordered group}
Let $(X,T)$ be a minimal Cantor system. The \emph{natural
extension} of $(X,T)$ is the topological dynamical system
$(\widehat{X}, \widehat{T})$ given by
$$
\widehat{X}
= \left\{ (x_i)_{i = 0}^\infty \in X^{\NN \cup \{ 0 \}}: x_i=T(x_{i+1}), \text{ for every } i\in \NN \cup \{ 0 \} \right\},
$$
and $\widehat{T} \left( (x_i)_{i = 0}^\infty \right) = (T(x_0),x_0,x_1,\cdots)$.
Observe that $(\widehat{X},\widehat{T})$ is a homeomorphic minimal Cantor system and that the projection on the first coordinate $\pi:\widehat{X}\to X$ is a factor map.

The rest of this section is devoted to the proof of the following proposition.
\begin{proposition}
\label{p:coborde3}
Let~$(X, T)$ be a minimal Cantor system and let~$(\widehat{X}, \widehat{T})$ be its natural extension. 
Then the groups~$G(X, T)$ and~$G(\widehat{X}, \widehat{T})$ are isomorphic as unital ordered groups.
\end{proposition}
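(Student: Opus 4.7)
The natural candidate is the homomorphism $\pi^{*}\colon G(X,T)\to G(\widehat{X},\widehat{T})$ induced by precomposition with the factor map, namely $[f]\mapsto [f\circ\pi]$. Since $\pi\circ\widehat{T}=T\circ\pi$, this operation sends coboundaries to coboundaries, non-negative functions to non-negative functions, and $\mathds{1}_X$ to $\mathds{1}_{\widehat{X}}$. Hence $\pi^{*}$ is a well-defined homomorphism of unital ordered groups, and the plan is to show it is bijective with positive inverse.

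The key structural input is the inverse-limit description of $\widehat{X}$: for each $n\ge 0$ the projection $\pi_n\colon\widehat{X}\to X$, $(x_i)_{i=0}^{\infty}\mapsto x_n$, satisfies $\pi_0=\pi$ and $\pi_{n+1}\circ\widehat{T}=\pi_n$, and every clopen subset of $\widehat{X}$ has the form $\pi_n^{-1}(U)$ for some $n\ge 0$ and some clopen $U\subset X$. Consequently every $g\in C(\widehat{X},\ZZ)$ factors as $g=g_0\circ\pi_n$ for some $g_0\in C(X,\ZZ)$. For surjectivity of $\pi^{*}$, I would observe that $g_0\circ\pi_{k+1}-g_0\circ\pi_k$ equals the $\widehat{T}$-coboundary of $g_0\circ\pi_{k+1}$; telescoping then gives $[g]=[g_0\circ\pi_n]=[g_0\circ\pi_0]=\pi^{*}[g_0]$ in $K^{0}(\widehat{X},\widehat{T})$.

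For injectivity and positivity of $(\pi^{*})^{-1}$ simultaneously, I would start from $f\in C(X,\ZZ)$ and $g=g_0\circ\pi_n\in C(\widehat{X},\ZZ)$ satisfying $f\circ\pi+g-g\circ\widehat{T}\ge 0$ (respectively $=0$). The case $n=0$ pulls back directly to $X$; for $n\ge 1$, evaluating at a point $(x_i)_{i=0}^{\infty}\in\widehat{X}$ and using the identities $x_0=T^n(x_n)$ and $x_{n-1}=T(x_n)$ rewrites the relation as $f\circ T^n+g_0-g_0\circ T\ge 0$ (resp.\ $=0$) on $X$, where $x_n$ ranges over all of $X$ by surjectivity of $T$. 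Since $f-f\circ T$ is tautologically a coboundary, iteration gives $[f\circ T^n]=[f]$ in $K^{0}(X,T)$, and combining with $[g_0-g_0\circ T]=0$ yields $[f]\ge 0$ (resp.\ $[f]=0$), as required.

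The only mildly subtle ingredient---and the step I expect to be the main obstacle---is the identity $[f\circ T^n]=[f]$ in $K^{0}(X,T)$, which is what allows a cohomological relation living on the infinite-past side of $\widehat{X}$ to be transferred to one on the original system. Everything else reduces to a routine combination of the inverse-limit topology of $\widehat{X}$ with telescoping cocycle manipulations.
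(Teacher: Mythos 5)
Your proposal is correct, and its core coincides with the paper's argument: your observation that every $g\in C(\widehat{X},\ZZ)$ factors as $g=g_0\circ\pi_n$ is exactly the content of the paper's Lemma~\ref{l:coborde1} (in the equivalent form $g\circ\widehat{T}^{\,n}=g_0\circ\pi$), and your telescoping of the coboundaries $g_0\circ\pi_{k+1}-g_0\circ\pi_k$ is the same computation the paper uses to conclude $[\,g\,]=[\,g\circ\widehat{T}^{\,n}\,]$ and hence surjectivity. The one genuine difference is in the remaining half: the paper simply cites Glasner--Weiss and Gottschalk--Hedlund for the fact that $\pi^{*}$ is injective and order-reflecting, whereas you prove this directly by writing a relation $f\circ\pi+g-g\circ\widehat{T}\ge 0$ (resp.\ $=0$) with $g=g_0\circ\pi_n$, pushing it down to $f\circ T^{n}+g_0-g_0\circ T\ge 0$ (resp.\ $=0$) on $X$ via surjectivity of $\pi_n$, and using the (indeed trivial) identity $[f\circ T^{n}]=[f]$ in $K^{0}(X,T)$. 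This makes your argument self-contained where the paper defers to the literature; the step you flagged as the ``main obstacle'' is immediate, since $f\circ T^{j}-f\circ T^{j+1}$ is by definition the coboundary of $f\circ T^{j}$.
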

To prove this proposition consider the following subgroup of~$C(\widehat{X}, \ZZ)$,
$$ C_{\pi}(\widehat{X},\ZZ)
\=
\left\{\widehat{f} \in C(\widehat{X},\ZZ): \text{ there is $f \in C(X, \ZZ)$ such that } \widehat{f} = f \circ \pi \right\}. $$
\begin{lemma}
\label{l:coborde1}
For every $\widehat{f} \in C(\widehat{X},\ZZ)$ there exists $k\geq 0$ such that $\widehat{f} \circ \widehat{T}^k \in C_{\pi}(\widehat{X},\ZZ)$.
\end{lemma}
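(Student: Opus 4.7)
The plan is to exploit the fact that any continuous $\ZZ$-valued function on the compact space $\widehat{X}$ depends on only finitely many coordinates, and then to use the explicit form of $\widehat{T}^k$ to absorb those coordinates into the first one.

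First I will show that for every $\widehat{f} \in C(\widehat{X}, \ZZ)$ there is $N \in \NN$ such that $\widehat{f}((x_i)_{i = 0}^\infty) = \widehat{f}((y_i)_{i = 0}^\infty)$ whenever $x_i = y_i$ for every $i \in [0, N - 1]$. This is the standard observation that a continuous map from a compact subspace of a product space to a discrete space has uniformly finite coordinate dependence: $\widehat{f}$ is locally constant since its range is discrete; each point of $\widehat{X}$ admits a basic cylinder neighbourhood (depending on only finitely many coordinates) on which $\widehat{f}$ is constant; and by compactness of $\widehat{X}$ only finitely many such cylinders are needed, so $N$ may be chosen as the maximum of their depths.

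Next, a direct induction on $k$ gives
\begin{equation*}
\widehat{T}^k((x_0, x_1, x_2, \ldots)) = (T^k(x_0), T^{k - 1}(x_0), \ldots, T(x_0), x_0, x_1, x_2, \ldots).
\end{equation*}
Hence for every $k \ge N - 1$ the first $N$ coordinates of $\widehat{T}^k((x_i)_{i = 0}^\infty)$ are $T^k(x_0), T^{k - 1}(x_0), \ldots, T^{k - N + 1}(x_0)$, which are determined by $x_0$ alone. Combined with the first step, $\widehat{f} \circ \widehat{T}^k$ is therefore constant on every fibre of the projection $\pi : \widehat{X} \to X$.

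It remains to verify that $\widehat{f} \circ \widehat{T}^k$ descends to a continuous function on $X$. Surjectivity of $T$ implies surjectivity of $\pi$, and since $\widehat{X}$ and $X$ are compact Hausdorff, $\pi$ is a closed continuous surjection, hence a quotient map. Any continuous function on $\widehat{X}$ that is constant on the fibres of $\pi$ thus factors continuously through $\pi$, yielding $f \in C(X, \ZZ)$ with $\widehat{f} \circ \widehat{T}^k = f \circ \pi$, so that $\widehat{f} \circ \widehat{T}^k \in C_\pi(\widehat{X}, \ZZ)$. The only delicate step in the whole argument is producing the uniform bound $N$ in the first step; the rest is an elementary computation together with a standard application of quotient maps between compact Hausdorff spaces.
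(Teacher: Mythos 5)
Your proof is correct and follows essentially the same approach as the paper: choose $k$ so that $\widehat{f}$ depends only on the first $k$ coordinates (via local constancy and compactness), then observe that $\widehat{T}^k$ pushes those coordinates into positions determined by $x_0$ alone. You additionally spell out the uniform-dependence step and the quotient-map argument for continuity of the descended function, details the paper leaves implicit, but the underlying argument is identical.
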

\begin{proof}
Let $k \geq 1$ be an integer such that for each~$x$ and~$y$ in $\widehat{X}$ whose first~$k$ coordinates coincide, we have~$\widehat{f}(x) = \widehat{f}(y)$.
Then for each~$z \in X$ and $(x_i)_{i = 0}^\infty$ and $(y_i)_{i = 0}^\infty$ in~$\pi^{-1}(z)$ we have
\begin{eqnarray*}
\widehat{f}\circ \widehat{T}^k(x) & = &\widehat{f}(T^k(z),T^{k-1}(z),\cdots,
z,x_1,x_2,\cdots )\\
 &= & \widehat{f}(T^k(z),T^{k-1}(z),\cdots,
z,y_1,y_2,\cdots)\\
 & = & \widehat{f}\circ \widehat{T}^k(y).
\end{eqnarray*}
\end{proof}
\begin{proof}[Proof of Proposition~\ref{p:coborde3}]
The linear map from~$C(X, T)$ to~$C(\widehat{X}, \widehat{T})$ defined by~$f \mapsto f \circ \pi$ induces an injective morphism~$\iota$ of unital ordered groups with the property that for $a \in K^0(X, T)$ we have $\iota(a) \in K^0(\widehat{X}, \widehat{T})^+$ if and only if~$a \in K^0(X, T)^+$, see~\cite[Proposition 3.1]{GlaWei95} and~\cite[\S14]{GotHed55}.
So we just need to prove that this morphism is surjective.
Let~$\widehat{f} \in C(\widehat{X}, \ZZ)$ be given and let~$k \ge 1$ be given by Lemma~\ref{l:coborde1}, so that $\left[ \widehat{f} \circ \widehat{T}^k \right]$ is in the image of~$\iota$.
Since
$$ \left[ \widehat{f} \circ \widehat{T}^k \right]
=
\left[ \widehat{f} \circ \widehat{T}^k + (\widehat{f} - \widehat{f} \circ \widehat{T}^k) \right]
=
\left[ \widehat{f} \right], $$
it follows that~$\left[ \widehat{f} \right]$ is also in the image of~$\iota$.
\end{proof}

\subsection{Bratteli diagrams and Bratteli-Vershik systems}
\label{ss:Bratteli diagrams}
In this section we briefly recall the concepts of Bratteli diagram and Bratteli-Vershik system.
For more details we refer to \cite{DurHosSka99} and \cite{HerPutSka92}.

A \textit{Bratteli diagram} is an infinite directed graph $(V,E)$,
such that the vertex set~$V$ and the edge set~$E$ can be
partitioned into finite sets
$$
V = V_0 \sqcup V_1 \sqcup  \cdots
\text{ and }
E = E_1 \sqcup E_2 \sqcup \cdots
$$
with the following properties:
\begin{itemize}
\item $V_0=\{v_0\}$ is a singleton.

\item For every $j \ge 1$, each edge in~$E_j$ starts in a vertex
in~$V_{j - 1}$ and arrives to a vertex in~$V_{j}$.

\item
Each vertex in~$V$ has at least one edge starting from it and each vertex different from~$v_0$ has at least one edge arriving to it.
\end{itemize}
Given~$n \in \NN \cup \{ 0 \}$ the~$n$-th \emph{transition} or \emph{incidence matrix} of the Bratteli diagram $B=(V,E)$ is the~$V_n \times V_{n + 1}$ matrix defined by
$$ M_n(v,v')
=
\text{ number of edges from } v \in V_n \text{ to } v' \in V_{n+1}.
$$

For a vertex $e \in E$ we denote by~$s(e)$ the vertex
where~$e$ starts and by~$r(e)$ the vertex to which~$e$ arrives. A
\textit{path} in $(V, E)$ is by definition a finite (resp.
infinite) sequence $e_1e_2 \ldots e_j$ (resp. $e_1e_2 ...$) such
that for each $\ell = 1, \ldots, j - 1$ (resp. $\ell = 1, \ldots$)
we have $r(e_\ell) = s(e_{\ell + 1})$. Note that for each
vertex~$v$ distinct from $v_0$ there is at least one path starting
at~$v_0$ and arriving to~$v$.

An \textit{ordered Bratteli diagram} $(V,E,\geq)$ is a Bratteli
diagram $(V,E)$ together with a partial order~$\geq$ on~$E$, so
that two edges are comparable if and only if they arrive at the
same vertex. For each $j \ge 1$ and $v \in V_j$ the partial
order~$\ge$ induces an order on the set of paths from~$v_0$ to~$V$
as follows:
$$
e_1\cdots e_j > f_1\cdots f_j
$$
if and only if there exists $j_0 \in \{1,\cdots, j \}$ such that
$e_{j_0} > f_{j_0}$ and such that for each $\ell \in \{ j_0 + 1,
\ldots, j \}$ we have $e_\ell = f_\ell$.

We say that an edge~$e$ is \textit{maximal} (resp.
\textit{minimal}) if it is maximal (resp. minimal) with respect to
the order~$\ge$ on the set of all edges in~$E$ arriving at~$r(e)$.

Fix an ordered Bratteli diagram $B \=(V,E,\geq)$. We denote
by~$X_B$ the set of all infinite paths in~$B$ starting at~$v_0$.
For a finite path $e_1 \ldots e_j$ starting at~$v_0$ we denote by
$U(e_1 \ldots e_j)$ the subset of~$X_B$ of all infinite paths
$e_1'e_2' \ldots$ such that for all $\ell \in \{ 1, \ldots, j \}$ we have $e_\ell' = e_\ell$.
We endow~$X_B$ with the topology generated by the sets $U(e_1 \ldots e_j)$.
Then each of this sets is clopen, so~$X_B$ becomes a compact Hausdorff space with a countable basis of clopen sets.

We denote by~$X_B^{\max}$ (resp. $X_B^{\min}$) the set of all
elements $(e_j)_{j = 1}^\infty$ of~$X_B$ so that for each $j \ge 1$ the
edge~$e_j$ is a maximal (resp. minimal). It is easy to see that
each of these sets is non-empty.

From now on we assume that the set~$X_B^{\min}$ is reduced to a
unique point, that we denote by~$x_{\min}$. We then
define the transformation $T_B:X_B\to X_B$ as follows:
\begin{itemize}
\item $T_B^{-1}(x_{\min}) = X_{\max}$.

\item Given $x \in X_B \setminus X_{\max}$, let $j \ge 1$ be the
smallest integer such that~$e_j$ is not maximal. Then we denote
by~$f_j$ the successor of~$e_j$ and by $f_1\ldots f_{j - 1}$ the
unique minimal path starting at~$v_0$ and arriving to~$s(f_k)$.
Then we put,
$$T_B(x)=f_1\cdots f_{k-1}f_ke_{k+1}e_{k+2}\ldots \ .$$
\end{itemize}
The map~$T_B$ is continuous, onto and invertible except
at~$x_{\min}$.
\begin{proposition}\label{p:Bratteli}
Let $B=(V,E,\geq)$ be a simple ordered Bratteli diagram such that
$X_B$ has only one minimal path and let~$(M_n)_{n = 0}^\infty$ be the sequence of transition matrices of~$B$.
Then the unital ordered group~$G(X_B, T_B)$ is isomorphic to the ordered group
$$ \varinjlim \left( (\ZZ^{V_n}, (\ZZ^{V_n})^+), M_n^T \right)_{n = 0}^\infty, $$
together with the unit~$[1, 0]$.
\end{proposition}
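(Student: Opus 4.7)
The plan is to invoke the Kakutani-Rokhlin tower structure naturally carried by a Bratteli-Vershik system and to identify~$K^0(X_B, T_B)$ with the inductive limit via cohomology classes of tower bases. For each~$n \geq 0$ and~$v \in V_n$, let~$h_n(v)$ denote the number of paths in~$B$ from~$v_0$ to~$v$, let~$\mu_{n,v}$ be the unique minimal such path, and let~$\tau_n(v)$ be the Rokhlin tower consisting of the~$h_n(v)$ cylinders~$U(e_1 \cdots e_n)$ with~$r(e_n)=v$, linearly ordered by the lex order induced by~$\geq$. Its base is~$B_n(v) := U(\mu_{n,v})$, and~$T_B$ cyclically advances each tower, sending its top cylinder out of~$\tau_n(v)$. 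Let~$C_n \subset C(X_B, \ZZ)$ be the subgroup of functions constant on each cylinder at level~$n$; because the cylinders form a clopen basis of~$X_B$ and continuous $\ZZ$-valued functions are locally constant, one has~$C(X_B,\ZZ) = \bigcup_n C_n$.

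The key cohomological observation is that consecutive levels~$L_k$,~$L_{k+1}$ of~$\tau_n(v)$ satisfy~$\mathds{1}_{L_k} = \mathds{1}_{L_{k+1}} \circ T_B$, so~$\mathds{1}_{L_{k+1}} - \mathds{1}_{L_k} \in \partial_{T_B}C(X_B,\ZZ)$; iterating shows~$[\mathds{1}_L] = [\mathds{1}_{B_n(v)}]$ in~$K^0(X_B, T_B)$ for every level~$L$ of~$\tau_n(v)$. I define~$\Phi_n : \ZZ^{V_n} \to K^0(X_B, T_B)$ by~$\Phi_n(\vec{a}) = \sum_{v \in V_n} a_v [\mathds{1}_{B_n(v)}]$. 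Refining~$B_n(v)$ to level~$n+1$ splits it as the disjoint union of the cylinders~$U(\mu_{n,v} \cdot e)$ over edges~$e$ starting at~$v$, and by the cohomological observation each such indicator represents~$[\mathds{1}_{B_{n+1}(r(e))}]$. Counting edges via~$M_n$ gives~$[\mathds{1}_{B_n(v)}] = \sum_{v'} M_n(v,v')[\mathds{1}_{B_{n+1}(v')}]$, i.e.~$\Phi_n = \Phi_{n+1} \circ M_n^T$, so the~$\Phi_n$'s assemble into a positive group homomorphism~$\Phi$ from~$\varinjlim(\ZZ^{V_n}, M_n^T)$ to~$K^0(X_B, T_B)$. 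Since~$B_0(v_0) = X_B$, this homomorphism sends~$[1, 0]$ to~$[\mathds{1}_X]$. Surjectivity is immediate: any class~$[f]$ with~$f \in C_n$ equals~$\Phi_n(\vec{b})$, where~$b_v$ is the sum of the constant values of~$f$ over the levels of~$\tau_n(v)$.

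The main obstacle is injectivity. If~$\Phi_n(\vec{a}) = 0$, so that~$\sum_v a_v\mathds{1}_{B_n(v)} = g - g\circ T_B$ for some~$g \in C(X_B,\ZZ)$, I must produce~$m \geq n$ with~$(M_n \cdots M_{m-1})^T \vec{a} = \vec{0}$. The technical difficulty is that the top-of-tower action of~$T_B$ prevents~$g \circ T_B$ from lying in any fixed~$C_m$, and coboundaries of functions in~$C_m$ do not in general have zero sum on each tower at level~$m$. The strategy is to choose~$m$ large enough that the overflow contributions of~$T_B$ at every tower top of level~$m$ are resolved cleanly at the next level; here the hypothesis that~$X_B^{\min}$ is a single point is essential, since it collapses the whole maximal set to one orbit and reduces the coboundary to a telescoping sum along each tower. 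Once this reduction is in place, a tower-by-tower telescoping argument forces the coordinates of~$(M_n \cdots M_{m-1})^T \vec{a}$ to vanish. This is the classical Bratteli-Vershik cocycle calculation of Herman-Putnam-Skau. The order structures finally agree, because~$\vec{a} \in (\ZZ^{V_n})^+$ yields the non-negative representative~$\sum_v a_v \mathds{1}_{B_n(v)}$ of~$\Phi_n(\vec{a})$, while any non-negative representative of a class~$[f]$ produces non-negative tower sums. Hence~$\Phi$ is the desired isomorphism of unital ordered groups.
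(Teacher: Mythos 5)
Your proposal is correct and follows essentially the same route as the paper, which simply observes that the standard Bratteli--Vershik identification of $K^0(X_B,T_B)$ with the dimension group of the diagram (via Kakutani--Rokhlin towers, as in~\cite[\S 6.6]{Dur10} and in Herman--Putnam--Skau) goes through verbatim when only $X_B^{\min}$ is assumed to be a singleton. The injectivity computation you defer to Herman--Putnam--Skau is precisely the content of the paper's one-line citation; the only point worth making explicit is that the telescoping boundary terms cancel because $T_B$ maps every tower top into the cylinder of the unique minimal path, so that once the level exceeds that of the transfer function~$g$, both $g$ at a tower base and $g\circ T_B$ at the corresponding tower top equal $g(x_{\min})$.
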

\begin{proof}
The proof is the same than for the case $B$ properly ordered, see for example~\cite[\S 6.6]{Dur10}.
\end{proof}

\section{A multidimensional Euclidean algorithm and finitely generated subgroups of~$\RR$}
\label{s:algorithm}
In this section we first define a multidimensional Euclidean algorithm in~\S\ref{ss:algorithm}, which is similar to the (homogeneous) Jacobi-Perron algorithm, see for example~\cite{Ber71,Sch73}.
In~\S\ref{ss:finitely generated} we show how this algorithm gives a canonical representation of a given finitely generated additive subgroup of~$\RR$ as a direct limit of certain non-negative matrices we call ``admissible''.
\subsection{A Jacobi-Perron type algorithm}
\label{ss:algorithm}
The input of the algorithm is an integer $d \ge 2$ and a strictly positive and non-increasing vector~$\vec{x}$ in~$\RR^d$.
The output is an integer~$d' \ge 1$ satisfying~$d' \le d$, a non-increasing vector~$\vec{x}'$ in~$\RR^{d'}$ such that~$x'_1 = x_d$, $x_1' > x_2'$ and~$x_{d'}' > 0$, together with a non-increasing vector $\vec{a} \in \NN^{d}$ satisfying~$a_{d} = 1$.
The output is uniquely determined by the existence of an injective function
$$ \sigma : \{ 1, \ldots, d' \} \to \{ 1, \ldots, d \} $$
such that~$\sigma(1) = d$ and such that, if we define the $d \times d'$ matrix~$A$ by
\begin{equation}
\label{e:admissible}
A(\cdot, j)
=
\begin{cases}
\vec{a} & \text{if } j = 1; \\
\sum_{k = 1}^{\sigma(j)} \vec{e}_k & \text{if } j \in \{ 2, \ldots, d' \};
\end{cases}
\end{equation}
then $\vec{x} = A \vec{x}'$, see Lemma~\ref{l:characteristic property} below.
In general the function~$\sigma$ is not uniquely determined by the input data.

To define the algorithm, we define as an intermediate step vectors~$\vec{y} \in \RR^d$ and $\vec{b} \in \NN^d$, as follows.
Put~$y_d \= x_d$, $b_d \= 1$ and for $j \in \{1, \ldots, d - 1 \}$ put
$$ y_j \= x_d \left\{ \frac{x_j - x_{j + 1}}{x_d} \right\}
\text{ and }
b_j \= \left[ \frac{x_j - x_{j + 1}}{x_d} \right]. $$
Note that $y_j \in [0, x_d)$ and $x_j - x_{j + 1} = y_j + b_j x_d$.

The vector~$\vec{a}$ is defined for $j \in \{1, \ldots, d - 1 \}$ by
$$ a_j = \sum_{k = j}^d b_k $$
and by~$a_d = 1$.
By definition we have
\begin{equation}
\label{e:intermediate}
\vec{x}
=
\begin{pmatrix}
1 & 1 & \dots & 1 & a_1
\\
0 & 1 & \dots & 1 & a_2
\\
\vdots & \vdots & \ddots & \vdots & \vdots
\\
0 & 0 & \dots & 1 & a_{d - 1}
\\
0 & 0 & \dots & 0 & 1
\end{pmatrix}
\vec{y}.
\end{equation}
Then $d' \= \# \{ j \in \{1, \ldots, d \} : y_j > 0 \}$ and we consider an injective function
$$ \sigma : \{ 1, \ldots, d' \} \to \{ 1, \ldots, d \} $$
so that for each~$j \in \{ 1, \ldots, d' \}$ we have $y_{\sigma(j)} > 0$ and so that the number~$y_{\sigma(j)}$ is non-decreasing with~$j$.
Then the vector
$$ \vec{x}' \= (y_{\sigma(j)})_{j = 1}^{d'} $$
is independent of the choice of~$\sigma$.
By definition~$1 \le d' \le d$ and the vector~$\vec{x}'$ satisfies~$x'_1 = x_d$, $x_1' > x_2'$ and~$x_{d'} > 0$.
Furthermore, $\sigma(1) = d$ and if~$A$ is the matrix defined by~\eqref{e:intermediate}, then $\vec{x} = A \vec{x}'$.

In the following simple lemma we capture one of the properties of the algorithm that will be very important for the representation of countable additive subgroups of~$\RR$.
\begin{lemma}
\label{l:generating property}
Let~$d \ge 2$ be an integer and let~$\vec{x}$ be a strictly positive and non-increasing vector in~$\RR^d$.
Let~$d' \ge 1$ and~$\vec{x}'$ be the corresponding integer and the corresponding vector given by algorithm defined in~\S\ref{ss:algorithm}.
Then the additive subgroups of~$\RR$ generated by the coordinates of~$\vec{x}$ and by those of~$\vec{x}'$ coincide.
\end{lemma}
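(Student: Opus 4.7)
The plan is to prove the two inclusions separately, and both follow almost directly from the construction.

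For the inclusion of the group generated by the coordinates of $\vec{x}'$ into the group generated by the coordinates of $\vec{x}$, I would use the intermediate vector $\vec{y}$. By the definition of the algorithm, we have $y_d = x_d$ and for each $j \in \{1, \ldots, d-1\}$
$$ y_j = (x_j - x_{j+1}) - b_j x_d, $$
with $b_j \in \ZZ$. Hence every $y_j$ is an integer linear combination of $x_1, \ldots, x_d$. Since each coordinate of $\vec{x}'$ equals some $y_{\sigma(j)}$, this gives the desired inclusion.

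For the reverse inclusion, I would invoke the matrix identity $\vec{x} = A \vec{x}'$ provided by the output of the algorithm, where $A$ is the $d \times d'$ matrix with non-negative integer entries defined by~\eqref{e:admissible}. Reading this identity row by row expresses each coordinate $x_i$ as an integer linear combination of the coordinates of $\vec{x}'$, giving the other inclusion.

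There is no real obstacle: the statement is essentially the assertion that the algorithm only performs integer row operations, and both the formulas for $y_j$ and the matrix equation $\vec{x} = A \vec{x}'$ make this explicit. The only minor point to be careful about is that $\vec{x}'$ is obtained from $\vec{y}$ by dropping the zero coordinates and reordering, but this does not change the subgroup generated by the nonzero coordinates, since dropping zeros and permuting entries preserves the additive subgroup they generate.
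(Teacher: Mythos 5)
Your proposal is correct and follows essentially the same route as the paper: the paper observes that $\vec{x}$ and $\vec{y}$ are related by the unimodular triangular matrix of~\eqref{e:intermediate} (which packages both of your inclusions at once) and then notes that $\vec{x}'$ consists exactly of the non-zero coordinates of $\vec{y}$. Your explicit formulas $y_j = (x_j - x_{j+1}) - b_j x_d$ and the identity $\vec{x} = A\vec{x}'$ are just the two directions of that same unimodular relation, so there is no substantive difference.
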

\begin{proof}
Since~$\vec{x}$ and the vector~$\vec{y} \in \RR^d$ determined by~\eqref{e:intermediate} are related by a unimodular matrix, the additive subgroups of~$\RR$ generated by the coordinates of~$\vec{x}$ and by those of~$\vec{y}$ coincide.
Thus the desired assertion follows from the fact that the vectors~$\vec{y}$ and~$\vec{x}'$ have the same non-zero coordinates.
\end{proof}

Given integers~$d \ge 2$ and~$d' \ge 1$ satisfying $d' \le d$, a non-increasing vector~$\vec{a} \in \NN^{d}$ and an injective function~$\sigma : \{ 1, \ldots, d' \} \to \{ 1, \ldots, d \}$ such that $\sigma(1) = d$, we denote by~$A(\vec{a}, \sigma)$ the $d' \times d$ matrix~$A$ defined by~\eqref{e:admissible}.
We call such a matrix \emph{admissible}.
Note that a square admissible matrix is unimodular.
\begin{lemma}
\label{l:characteristic property}
Let~$d \ge 2$ be an integer and let~$\vec{x}$ be a strictly positive and non-increasing vector in~$\RR^d$.
Then there is a unique integer~$d' \ge 1$, a unique non-increasing vector~$\vec{x}' \in \RR^{d'}$ and a unique non-increasing vector~$\vec{a} \in \ZZ^d$ such that
$$ d' \le d,
x'_1 = x_d,
x_1' > x_2',
x_{d'} > 0,
a_d = 1 $$
and such that there is an injective function~$\sigma : \{ 1, \ldots, d' \} \to \{ 1, \ldots, d \}$ such that~$\sigma(1) = d$ and
$$ \vec{x} = A(\vec{a}, \sigma) \vec{x}'. $$
If furthermore the coordinates of~$\vec{x}$ are rationally independent, then~$\vec{x}$ is strictly decreasing, $d' = d$, the coordinates of~$\vec{x}'$ are rationally independent and~$\sigma$ is uniquely determined by~$\vec{x}$.
\end{lemma}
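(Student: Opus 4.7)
The plan is to derive existence directly from the algorithm of~\S\ref{ss:algorithm}, and to establish uniqueness by extracting the data from any valid representation via Euclidean division. For existence, one applies the algorithm to $\vec{x}$. Since $y_d = x_d$ is the unique maximum among the $y_j$ (the others lying in $[0, x_d)$), one can choose $\sigma$ so that $\sigma(1) = d$ and $(y_{\sigma(j)})_j$ is non-increasing. A direct verification using~\eqref{e:intermediate} and the recursive definition of $\vec{a}$ shows that each of the listed properties holds.

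For uniqueness, suppose $\vec{x} = A(\vec{a},\sigma)\vec{x}'$ satisfies every stated constraint. The $d$-th coordinate of $A(\vec{a},\sigma)\vec{x}'$ equals $a_d x'_1 = x'_1$, because $\sigma$ injective with $\sigma(1) = d$ leaves no $j \ge 2$ with $\sigma(j) = d$; hence $x_d = x'_1$. For $i < d$, a short computation yields
$$
x_i - x_{i+1} = (a_i - a_{i+1})\, x_d + \epsilon_i,
$$
where $\epsilon_i = x'_j$ for the unique $j \ge 2$ satisfying $\sigma(j) = i$ if such a $j$ exists, and $\epsilon_i = 0$ otherwise. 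The non-increasing hypothesis on $\vec{a}$ gives $a_i - a_{i+1} \ge 0$, while the strict inequality $x'_1 > x'_2 \ge \cdots \ge x'_{d'}$ forces $\epsilon_i \in [0, x_d)$. Hence $(a_i - a_{i+1}, \epsilon_i)$ is the quotient-remainder pair obtained by dividing $x_i - x_{i+1}$ by $x_d$, uniquely determined by $\vec{x}$; combined with $a_d = 1$ this recovers $\vec{a}$. Since $\vec{x}'$ is non-increasing with $x'_1 = x_d$, it must coincide with the non-increasing sorting of the multiset $\{x_d\} \cup \{\epsilon_i : i < d,\ \epsilon_i > 0\}$, which fixes both $\vec{x}'$ and $d'$.

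For the rationally independent case, two equal coordinates of $\vec{x}$ would yield a nontrivial $\QQ$-linear relation, so $\vec{x}$ is strictly decreasing. By the uniqueness just proved, the $\vec{x}'$ in any representation equals the algorithm's output, so Lemma~\ref{l:generating property} applies and the coordinates of $\vec{x}'$ generate the same subgroup of $\RR$ as those of $\vec{x}$; since that group has rank $d$ while $\vec{x}'$ provides only $d' \le d$ generators, we conclude $d' = d$ and the coordinates of $\vec{x}'$ are rationally independent. The $d-1$ numbers $\epsilon_i$ for $i < d$ are then all nonzero and pairwise distinct (rationally independent nonzero reals cannot coincide), so the assignment sending $j \ge 2$ to the unique $i$ with $\epsilon_i = x'_j$ is forced, giving the unique $\sigma$.

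The main obstacle is the uniqueness extraction, and specifically the observation that $\epsilon_i < x_d$. This rests on the strict inequality $x'_1 > x'_2$ in the hypothesis, which is precisely what allows one to interpret the decomposition of $x_i - x_{i+1}$ as a Euclidean division by $x_d$, and hence to recover $\vec{a}$ and $\vec{x}'$ uniquely from $\vec{x}$.
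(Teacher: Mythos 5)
Your proof is correct and follows essentially the same route as the paper's: existence is taken from the algorithm, and uniqueness is obtained by reading off $a_i - a_{i+1}$ and the remainder $\epsilon_i$ (the paper's $y_i$) as the Euclidean division of $x_i - x_{i+1}$ by $x_d$, using $x_d = x'_1 > x'_2 \ge \epsilon_i \ge 0$. The rational-independence part likewise matches the paper's argument via Lemma~\ref{l:generating property} and a rank count, so there is nothing to add.
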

\begin{proof}
The existence of~$d'$, $\vec{x}'$ and~$\vec{a}$ is given by the above.

To prove uniqueness, let~$d', \vec{x}', \vec{a}$ and~$\sigma$ be as in the statement of the lemma and let~$\vec{y} \in \RR^d$ be the vector defined for~$j \in \{1, \ldots, d \}$ by
$$ y_j
=
\begin{cases}
0 & \text{if } j \not\in \sigma(\{1, \ldots, d' \}); \\
x'_{\sigma^{-1}(j)} & \text{otherwise}.
\end{cases}
$$
We clearly have~\eqref{e:intermediate}.
Thus~$\vec{y}$, and hence~$d'$ and $\vec{x}'$, are uniquely determined by~$d$, $\vec{x}$ and~$\vec{a}$.
It remains to show that~$\vec{a}$ is uniquely determined by~$d$ and~$\vec{x}$.
To see this, observe that by~\eqref{e:intermediate} for each~$j \in \{1, \ldots, d - 1 \}$ we have
$$ x_j - x_{j + 1} = y_j + y_d(a_j - a_{j + 1}). $$
Since~$y_d = x_1' > x_2' \ge y_j \ge 0$ and~$a_j - a_{j + 1}$ is an integer, it follows that $a_j - a_{j + 1}$ is uniquely determined by~$x_j - x_{j + 1}$.
Since this holds for each $j \in \{ 1, \ldots, d - 1 \}$ and by definition~$a_d = 1$, it follows that~$\vec{a}$ is uniquely determined by~$d$ and~$\vec{x}$.

Suppose that the coordinates of~$\vec{x}$ are rationally independent and let~$\Gamma$ be the additive subgroup of~$\RR$ generated by the coordinates of~$\RR$.
Then the coordinates of~$\vec{x}$ form a base of~$\Gamma$ and the rank of~$\Gamma$ is equal to~$d$.
By Lemma~\ref{l:generating property} the coordinates of~$\vec{x}' \in \RR^{d'}$ generate~$\Gamma$.
Since~$d' \le d$, we conclude that~$d' = d$ and that the coordinates of~$\vec{x}'$ form a base of~$\Gamma$ and are thus rationally independent.
In particular, the coordinates of~$\vec{x}'$ are pairwise distinct.
So~$\sigma$ is characterized as the unique permutation of~$\{1, \ldots, d \}$ such that~$j \mapsto y_{\sigma(j)}$ is decreasing with~$j$.
\end{proof}

\subsection{Representing finitely generated additive subgroups of~$\RR$}
\label{ss:finitely generated}
Let~$d \ge 2$ be an integer and let~$\Gamma$ be a finitely generated additive subgroup of~$\RR$ of rank~$d$.
Given a non-increasing vector~$\vec{x}^{(0)}$ in~$\RR^d$ whose coordinates are strictly positive and form a base~$\Gamma$, we define for each~$n \in \NN$ the following objects recursively:
\begin{itemize}
\item
a strictly decreasing vector~$\vec{x}^{(n)}$ in~$\RR^d$ whose coordinates are strictly positive, rationally independent and form a base of~$\Gamma$;
\item 
a non-increasing vector~$\vec{a}^{(n)}$ in~$\NN^d$ such that~$a^{(n)}_d = 1$;
\item 
a permutation~$\sigma_n$ of~$\{1, \ldots, d \}$ such that~$\sigma_n(1) = d$ and
$$ \vec{x}^{(n - 1)} = A(\vec{a}^{(n)}, \sigma_n) \vec{x}^{(n)}. $$
\end{itemize}
Once~$\vec{x}^{(n - 1)}$ is defined, let~$d'$, $\vec{x}'$, $\vec{a}$ and~$\sigma$ be given by the algorithm defined in~\S\ref{ss:algorithm} with~$\vec{x} = \vec{x}^{(n - 1)}$.
Then Lemma~\ref{l:characteristic property} implies that~$d' = d$ and, together with Lemma~\ref{l:generating property}, that $\vec{x}^{(n)} \= \vec{x}'$, $\vec{a}^{(n)} \= \vec{a}$ and~$\sigma_n \= \sigma$ have the properties above.
Moreover, the vectors $\vec{x}^{(n)}$ and $\vec{a}^{(n)}$ and the permutation~$\sigma_n$ are all uniquely determined by~$\vec{x}^{(n - 1)}$ and hence by~$\vec{x}^{(0)}$.
\begin{proposition}
\label{p:finitely generated}
Let~$\Gamma$ be a finitely generated additive subgroup of~$\RR$ containing~$1$ and of rank~$d \ge 2$ and let~$\vec{x}^{(0)} \in \RR^d$ be a non-increasing vector whose coordinates are strictly positive and form a base of~$\Gamma$.
Furthermore, let~$(\vec{x}^{(n)})_{n = 1}^{\infty}$, $(\vec{a}^{(n)})_{n = 1}^\infty$ and~$(\sigma_n)_{n = 1}^\infty$ be given by the iteration of the algorithm defined in~\S\ref{ss:algorithm}, as above, and consider the ordered group
$$ (G, G^+)
\=
\varinjlim \left( \left( \ZZ^d, (\ZZ^d)^+ \right), A \left( \vec{a}^{(n)}, \sigma_n \right)^T \right)_{n = 1}^\infty. $$
Then there is a function~$\phi : G \to \RR$ such that for each~$[\vec{v}, n] \in G$ we have $\phi([\vec{v}, n]) = \langle \vec{v}, \vec{x}^{(n)} \rangle$ and such that it takes values in~$\Gamma$ and is an isomorphism between~$(G, G^+)$ and~$(\Gamma, \Gamma \cap \RR^+)$.
\end{proposition}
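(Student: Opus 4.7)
The plan is to apply Lemma~\ref{l:nexo} with the non-negative matrices $L_n := A(\vec{a}^{(n)}, \sigma_n)^T$. The recursion $\vec{x}^{(n-1)} = A(\vec{a}^{(n)}, \sigma_n) \vec{x}^{(n)}$ built into the algorithm places the sequence $(\vec{x}^{(n)})$ inside the inverse limit $\varprojlim \bigl( (\RR^+)^d, L_n^T \bigr)$, and Lemma~\ref{l:generating property} ensures that the coordinates of each $\vec{x}^{(n)}$ generate $\Gamma$. Hence the candidate map $\phi$ takes values in $\Gamma$ and is surjective onto $\Gamma$.

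The central step---and the main obstacle---is to verify the hypothesis of Lemma~\ref{l:nexo}, namely that the inverse limit collapses to the single ray $\{ \lambda (\vec{x}^{(n)}) : \lambda \geq 0 \}$. The key combinatorial observation enabling this is that every admissible matrix $A(\vec{a}, \sigma)$ has a strictly positive first column (namely $\vec{a} \in \NN^d$) and a strictly positive first row (equal to $(a_1, 1, \ldots, 1)$ with $a_1 \geq a_d = 1$); consequently, any product $AB$ of two admissible matrices satisfies $(AB)(i, j) \geq A(i, 1) B(1, j) \geq 1$, so it is strictly positive. Birkhoff's theorem on the contraction of the Hilbert projective metric by strictly positive matrices then implies that the nested images $\bigl( A(\vec{a}^{(n+1)}, \sigma_{n+1}) \cdots A(\vec{a}^{(n+m)}, \sigma_{n+m}) \bigr) \bigl( (\RR^+)^d \bigr)$ shrink projectively as $m \to \infty$; since $(\vec{x}^{(n)})$ already lies in the inverse limit, this forces any inverse-limit element $(\vec{w}^{(n)})$ to be a non-negative scalar multiple of $(\vec{x}^{(n)})$, with a uniform factor $\lambda \geq 0$.

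Having verified the hypothesis, Lemma~\ref{l:nexo} yields that $(G, G^+)$ is a dimension group, that $\phi$ is the unique state of $(G, G^+)$ up to a scalar, and that $\phi$ induces an isomorphism of ordered groups $G / \inf(G) \cong (\Gamma, \Gamma \cap \RR^+)$. To promote this to an honest isomorphism $G \cong \Gamma$, I check that $\inf(G) = 0$ by verifying that $\phi$ is already injective: if $\phi([\vec{v}, n]) = \langle \vec{v}, \vec{x}^{(n)} \rangle = 0$, the rational independence of the coordinates of $\vec{x}^{(n)}$ (Lemma~\ref{l:characteristic property}), which form a $\ZZ$-base of $\Gamma$, forces $\vec{v} = 0$ and hence $[\vec{v}, n] = 0$. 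Since $\inf(G) \subseteq \ker(\phi) = 0$, the map $\phi$ itself is the advertised isomorphism of ordered groups between $(G, G^+)$ and $(\Gamma, \Gamma \cap \RR^+)$.
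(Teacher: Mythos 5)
Your overall route is the same as the paper's: feed the matrices $L_n=A(\vec{a}^{(n)},\sigma_n)^T$ into Lemma~\ref{l:nexo}, verify that the inverse limit $\varprojlim\left((\RR^+)^d,L_n^T\right)$ is the single ray through $(\vec{x}^{(n)})_{n=1}^\infty$, and then upgrade the induced isomorphism $G/\inf(G)\cong(\Gamma,\Gamma\cap\RR^+)$ to one defined on $G$ itself by noting that the rational independence of the coordinates of each $\vec{x}^{(n)}$ makes $\phi$ injective, hence $\inf(G)$ trivial. That last part, and the use of Lemma~\ref{l:generating property} to see that $\phi$ has image exactly $\Gamma$, are fine. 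The gap is in what you yourself identify as the central step.

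Strict positivity of the pairwise products is not enough to conclude that the nested images of $(\RR^+)^d$ collapse to a ray. Birkhoff's theorem contracts the Hilbert metric by the factor $\tanh(D(A)/4)$, which tends to $1$ as the projective diameter $D(A)$ of the image tends to infinity; to make an infinite product of these factors vanish one needs a \emph{uniform upper bound} on the diameters of (blocks of) the factors, and your observation only yields the lower bound $(AB)(i,j)\ge 1$. The implication genuinely fails without such a bound: the matrices $B_k=\bigl(\begin{smallmatrix} N_k & 1\\ 1 & N_k\end{smallmatrix}\bigr)$ have all entries $\ge 1$ and satisfy $D(B_k)=2\ln N_k$, and if $N_k\to\infty$ fast enough (say $N_{k+1}=N_k^3$) one checks that $\bigcap_m B_1\cdots B_m\bigl((\RR^+)^2\bigr)$ is a two--dimensional cone, not a ray; correspondingly $\prod_k\tanh(D(B_k)/4)=\prod_k\frac{N_k-1}{N_k+1}>0$. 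Since the entries of $\vec{a}^{(n)}$ are unbounded along the algorithm (they play the role of partial quotients), no uniform bound on $D$ of a single admissible matrix is available either. What actually rescues the argument --- and is the content of the paper's Lemma~\ref{l:definite contraction} --- is the entrywise domination $A(\cdot,j)\le A(\cdot,1)$ for $j\ge 2$ (each non-first column is a $0$--$1$ vector, while the first column $\vec{a}$ has all entries $\ge 1$ and is non-increasing), which yields $A'(1,j)\,A(\cdot,1)\le AA'(\cdot,j)\le d\,A'(1,j)\,A(\cdot,1)$ for every column $j$ and hence the uniform bound $D(AA')\le 2\ln d$; only then does \eqref{e:Schwarz-Birkhoff} (together with Lemma~\ref{l:showing unique ergodicity}) force the inverse limit onto the ray. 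Your ``first row / first column'' observation contains the raw material for this bound, but as written the passage from strict positivity to projective collapse is a genuine gap.
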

To prove this proposition for each integer~$d \ge 2$ we denote by~$\interior(\RR^+)^d$ the interior of~$(\RR^+)^d$, which is the cone of strictly positive vectors in~$\RR^d$.
We define the \emph{Hilbert projective metric~$\Theta_d$} on~$\interior(\RR^+)^d$ by
$$ \Theta_d(\vec{x}, \vec{x}')
\=
\ln \left( \frac{\inf \{ \mu > 0 : \mu \vec{x} \ge \vec{x}' \}}{\sup \{\lambda > 0 : \lambda \vec{x} \le \vec{x}'\}} \right). $$
Note that~$\Theta_d(\vec{x}, \vec{x}') = 0$ if and only if~$\vec{x}$ and~$\vec{x}'$ are proportional.
On the other hand, for an integer $d' \ge 2$ and a strictly positive $d' \times d$ matrix~$A$ we have
$$ A((\RR^+)^{d'} \setminus \{ \vec{0} \})
\subset
\interior(\RR^+)^d $$
and
$$ D(A)
\=
\sup \left\{ \Theta_d( A\vec{y}, A\vec{y}') : \vec{y}, \vec{y}' \in (\RR^+)^{d'} \setminus \{ \vec{0} \} \right\} < \infty. $$
Then for every~$\vec{y}, \vec{y}' \in (\RR^+)^{d'} \setminus \{ \vec{0} \}$ we have
\begin{equation*}
\Theta_d( A\vec{y}, A\vec{y}')
\le
\tanh(D(A)/4) \cdot \Theta_{d'}( \vec{y}, \vec{y}'),
\end{equation*}
see for example the proof of~\cite[Lemma~1 in~\S4]{Bir57} or~\cite[Theorem~1.1]{Liv95}.
Thus, if in addition~$d'' \ge 2$ is an integer and~$A'$ is a strictly positive~$d'' \times d'$ matrix, then
\begin{equation}
\label{e:Schwarz-Birkhoff}
D(AA') \le \tanh(D(A)/4) \cdot D(A').
\end{equation}

The following lemma is a direct consequence of the previous considerations.
\begin{lemma}
\label{l:showing unique ergodicity}
Let~$(d_n)_{n = 1}^\infty$ be a sequence of integers in~$\NN$ and for each~$n \in \NN$ let~$A_n$ be a strictly positive $d_n \times d_{n + 1}$ matrix.
If there is~$D_0 > 0$ such that for every~$n \in \NN$ we have~$D(A_n) \le D_0$, then for each~$n \in \NN$ there is~$\vec{x}^{(n)} \in (\RR^+)^{d_n}$ such that
$$ \varprojlim ((\RR^+)^{d_n}, A_n)_{n = 1}^\infty
=
\left\{ (\lambda \vec{x}^{(n)})_{n = 1}^\infty : \lambda \ge 0 \right\}. $$
\end{lemma}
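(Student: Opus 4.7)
The plan is to combine the Birkhoff contraction inequality~\eqref{e:Schwarz-Birkhoff} with compactness of the positive projective simplex. First, by iterating~\eqref{e:Schwarz-Birkhoff} and using the uniform bound $D(A_k) \le D_0$, one obtains for every $n < m$ the estimate
$$ D(A_n A_{n+1} \cdots A_{m-1}) \le D_0 \cdot \tanh(D_0/4)^{m-n-1}, $$
which tends to $0$ as $m \to \infty$. In projective terms, the image of $(\RR^+)^{d_m} \setminus \{\vec{0}\}$ under $A_n \cdots A_{m-1}$ therefore has Hilbert diameter shrinking to zero.

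Next I would work on the unit simplex. Let $S_n \= \{\vec{x} \in (\RR^+)^{d_n} : \sum_i x_i = 1\}$; since each $A_n$ is strictly positive, the formula $\widetilde{A}_n(\vec{x}) \= A_n \vec{x} / \|A_n \vec{x}\|_1$ defines a continuous map $\widetilde{A}_n : S_{n+1} \to S_n$. The inverse limit $\varprojlim(S_n, \widetilde{A}_n)$ is a nonempty compact Hausdorff space, being the inverse limit of a sequence of nonempty compact Hausdorff spaces with continuous bonding maps. If $(\bar{y}^{(n)})_n$ and $(\bar{z}^{(n)})_n$ are two of its elements, then for every $m > n$ their $n$-th coordinates both lie in $\widetilde{A}_n \cdots \widetilde{A}_{m-1}(S_m)$, so $\Theta_{d_n}(\bar{y}^{(n)}, \bar{z}^{(n)}) \to 0$ as $m \to \infty$; since vanishing Hilbert distance means proportionality and both points lie in $S_n$, they coincide. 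Hence the projective inverse limit consists of a single sequence $(\bar{x}^{(n)})_{n=1}^\infty$.

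Finally, I would lift to the linear setting. The relation $\widetilde{A}_n \bar{x}^{(n+1)} = \bar{x}^{(n)}$ means $A_n \bar{x}^{(n+1)} = c_n \bar{x}^{(n)}$ for some $c_n > 0$; setting $\vec{x}^{(1)} \= \bar{x}^{(1)}$ and inductively $\vec{x}^{(n+1)} \= \bar{x}^{(n+1)}/(c_1 \cdots c_n)$ produces an element $(\vec{x}^{(n)})_n$ of the linear inverse limit. For any other element $(\vec{y}^{(n)})_n$: either some (equivalently, every, by strict positivity of the~$A_n$) $\vec{y}^{(n)}$ vanishes and we get the zero sequence, or all are nonzero, in which case $(\vec{y}^{(n)}/\|\vec{y}^{(n)}\|_1)_n$ lies in the projective inverse limit and hence equals $(\bar{x}^{(n)})_n$. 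This forces $\vec{y}^{(n)} = \lambda_n \vec{x}^{(n)}$ for positive scalars~$\lambda_n$, and the identity $\vec{y}^{(n)} = A_n \vec{y}^{(n+1)}$ combined with $\vec{x}^{(n)} = A_n \vec{x}^{(n+1)}$ gives $\lambda_n = \lambda_{n+1}$, yielding the desired description. The only delicate step is the contraction bookkeeping in the iteration of~\eqref{e:Schwarz-Birkhoff}; no genuine obstacle arises, as the projective compactness and Birkhoff contraction do all the work.
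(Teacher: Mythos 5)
Your proof is correct, and it is exactly the argument the paper leaves implicit: the paper offers no written proof of this lemma, stating only that it is ``a direct consequence of the previous considerations,'' meaning the Birkhoff contraction inequality~\eqref{e:Schwarz-Birkhoff} together with the fact that vanishing Hilbert distance forces proportionality. Your bookkeeping (the bound $D(A_n\cdots A_{m-1})\le D_0\tanh(D_0/4)^{m-n-1}$, nonemptiness of the projective inverse limit by compactness, and the lift back to the linear cone with the scalars $c_n$) checks out in every step.
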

The following lemma is needed for the proof of Proposition~\ref{p:finitely generated}.
\begin{lemma}
  \label{l:definite contraction}
Given an integer~$d \ge 2$, for every pair of $d \times d$ admissible matrices~$A$ and~$A'$ the matrix~$AA'$ is strictly positive and
$$ D(AA') \le 2 \ln d. $$
\end{lemma}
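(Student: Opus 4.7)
The plan is to analyze $C \= AA'$ column by column, taking advantage of the fact that for a strictly positive matrix $B$ the Hilbert diameter reduces to
\[
D(B) = \max_{k,l}\Theta_d\bigl(B(\cdot,k),B(\cdot,l)\bigr),
\]
since the extreme rays of $B((\RR^+)^d\setminus\{\vec{0}\})$ are the columns of $B$. Write $A = A(\vec{a},\sigma)$ and $A' = A(\vec{a}',\sigma')$ with $\sigma,\sigma'$ permutations of $\{1,\ldots,d\}$ satisfying $\sigma(1)=\sigma'(1)=d$.

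First I would dispose of strict positivity. For every $i$ we have $A(i,1) = a_i \ge 1$, and for every $k$ we have $A'(1,k)\ge 1$ (equal to $a'_1\ge 1$ if $k=1$, equal to $1$ otherwise since $\sigma'(k)\ge 1$). Hence the single term $j=1$ in the sum defining $(AA')(i,k)$ already contributes at least $1$, so $C(i,k)\ge 1$ for all $i,k$.

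Next I would compute the columns of $C$ explicitly. For $k\ge 2$, the column $A'(\cdot,k)$ is the $0$--$1$ vector $\sum_{j=1}^{\sigma'(k)}\vec{e}_j$, so
\[
C(\cdot,k) \;=\; \sum_{j=1}^{\sigma'(k)}A(\cdot,j) \;=\; \vec{a} \;+\; \sum_{j=2}^{\sigma'(k)}A(\cdot,j),
\]
and using $A(i,j)=\mathbf{1}[i\le\sigma(j)]$ for $j\ge 2$ together with $\sigma(j)\le d-1$ for $j\ge 2$, one gets
\[
C(i,k)=a_i+n_i(k)\;\;(i\le d-1),\qquad C(d,k)=1,
\]
where $n_i(k)$ is a non-negative integer bounded above by $\sigma'(k)-1\le d-2$. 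For $k=1$,
\[
C(\cdot,1)=A\vec{a}'=a'_1\vec{a}+\sum_{j=2}^d a'_j A(\cdot,j),
\]
and expanding gives $C(i,1)=a'_1 a_i+m_i(1)$ with $0\le m_i(1)\le (d-1)a'_1$ for $i\le d-1$, and $C(d,1)=a'_1$.

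The key observation I would then use is that dividing by the reference quantity $a_i$ normalises all columns. Using $a_i\ge 1=a_d$, the bounds above yield, uniformly in $i$,
\[
\frac{C(i,k)}{a_i}\in[1,d-1]\;\;(k\ge 2)\qquad\text{and}\qquad\frac{C(i,1)}{a_i}\in[a'_1,d\,a'_1].
\]
For any two columns $k,l$ the ratio $C(i,k)/C(i,l)$ is the quotient of these two normalised quantities, so it lies in a fixed interval independent of $i$: for $k,l\ge 2$ in $[1/(d-1),d-1]$, and for $k=1$, $l\ge 2$ in $[a'_1/(d-1),d\,a'_1]$. Since $\Theta_d(\vec{u},\vec{v})=\ln\bigl(\max_i u_i/v_i \big/ \min_i u_i/v_i\bigr)$, these ranges give $\Theta_d(C(\cdot,k),C(\cdot,l))\le\max\{2\ln(d-1),\ln(d(d-1))\}\le 2\ln d$, whence $D(C)\le 2\ln d$.

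The only mildly delicate step is step three, where one has to be careful that the ``large'' parameters $a_1,\ldots,a_{d-1}$ and $a'_1$ do not blow up the column-ratios; this is handled by the monotonicity $a_i\ge 1$ and by the fact that the first column of $A'$ only multiplies $\vec{a}$ by a factor of $a'_1$ (shared by all coordinates through the $a'_1\vec{a}$ term) plus a correction of the same order. Everything else is bookkeeping.
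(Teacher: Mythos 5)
Your proof is correct, and the underlying estimate is the same as the paper's: every column of $AA'$ is comparable to the reference vector $\vec{a} = A(\cdot,1)$ within a multiplicative factor of $d$. The only real difference is organizational. You reduce $D(AA')$ to the pairwise Hilbert distances between columns, invoking the (standard, but nontrivial and not proved in the paper) fact that the projective diameter of the image cone of a strictly positive matrix is attained at pairs of columns; the paper instead sandwiches $AA'\vec{v}$ for an \emph{arbitrary} $\vec{v} \in (\RR^+)^d \setminus \{\vec{0}\}$ between $(a'_1 v_1 + v_2 + \cdots + v_d)A(\cdot,1)$ and $d$ times that vector -- which is exactly your normalized column bounds applied linearly -- deducing $\Theta_d(AA'\vec{v}, A(\cdot,1)) \le \ln d$ and concluding by the triangle inequality. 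The paper's packaging is thus entirely self-contained, while yours requires the extreme-ray characterization of $D(\cdot)$ as an external input; in exchange you get the marginally sharper bound $\ln(d(d-1))$. If you wanted to make your argument self-contained you could simply note that your inequalities $C(i,k)/a_i \in [1, d-1]$ for $k \ge 2$ and $C(i,1)/a_i \in [a'_1, d a'_1]$ already give the paper's two-sided bound on $C\vec{v}$ by linearity, and finish with the triangle inequality.
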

\begin{proof}
For each~$j \in \{2, \ldots, d\}$ we have~$A(\cdot, j) \le A(\cdot, 1)$ and therefore
$$ A(\cdot, 1) \le AA'(\cdot, j) \le dA(\cdot, 1). $$
This implies in particular that the matrix~$AA'$ is strictly positive.
Moreover, if~$a' \= \langle A'(\cdot, 1), \vec{e}_1 \rangle$ is the first coordinate of~$A'(\cdot, 1)$, then 
$$ a' A(\cdot, 1) \le AA'(\cdot, 1) \le d a' A(\cdot, 1). $$
So for~$\vec{v} = (v_1, \ldots, v_d) \in (\RR^+)^d \setminus \{ \vec{0} \}$ we have
$$ (a' v_1 + v_2 + \cdots + v_d) A(\cdot, 1)
\le
AA' \vec{v}
\le d (a' v_1 + v_2 + \cdots + v_d) A(\cdot, 1). $$
This implies  $\Theta_d(AA'\vec{v}, A(\cdot, 1)) \le \ln d$ and using the triangular inequality we get~$D(AA') \le 2 \ln d$.
\end{proof}
\begin{proof}[Proof of Proposition~\ref{p:finitely generated}]
Let~$(\vec{x}^{(n)})_{n = 1}^\infty$ be the sequence defined from~$\vec{x}^{(0)}$, as in the beginning of this section.
It is by definition an element of
$$ \varprojlim \left( (\RR^+)^d, A\left( \vec{a}^{(n)}, \sigma_n \right) \right)_{n = 1}^\infty. $$
On the other hand, Lemma~\ref{l:showing unique ergodicity} and Lemma~\ref{l:definite contraction} imply that this inverse limit is equal to $\{ (\lambda \vec{x}^{(n)})_{n = 1}^\infty : \lambda \ge 0 \}$.
So the hypotheses of Lemma~\ref{l:nexo} are satisfied with
$$ (d_n)_{n = 1}^\infty = (d)_{n = 1}^\infty
\text{ and } (L_n)_{n = 1}^\infty = \left( A \left( \vec{a}^{(n)}, \sigma_n \right)^T \right)_{n = 1}^\infty. $$
Let~$\phi$ be function given by this lemma, which is the unique state of~$(G, G^+)$.
Since for each~$n \in \NN$ the coordinates of~$\vec{x}^{(n)}$ are rationally independent, it follows that~$\phi$ is injective and hence that the infinitesimal group of~$(G, G^+)$ is trivial.
Since furthermore for each~$n \in \NN$ the coordinates of~$\vec{x}^{(n)}$ form a base of~$\Gamma$, by Lemma~\ref{l:nexo} it follows that~$(G, G^+)$ is isomorphic to~$(\Gamma, \Gamma \cap \RR^+)$.
\end{proof}

\section{Dimension groups with a unique state}
\label{s:infinitely generated}
We start this section by introducing a class of matrices we call ``basic'', that appear naturally as transition matrices of the Bratteli-Vershik system associated to the generalized odometer associated to a unimodal map, see~\S\ref{ss:generalized odometers}.
The rest of the section is devoted to represent a given countable additive subgroup of~$\RR$ containing~$1$, that is not contained in~$\QQ$, as a direct limit of basic matrices (Proposition~\ref{p:infinitely generated}).

Let~$V$ be a non-empty finite subset of~$\NN \cup \{ 0 \}$, let~$v$ be its minimal element and suppose that~$v + 1 \in V$.
Let~$V'$ be a finite subset of~$\NN$ containing~$V \setminus \{ v \}$ and whose minimal element is~$v + 1$.
Then we denote by~$B(V, V')$ the~$V \times V'$ matrix defined for~$j \in V'$ by
$$ B(V, V')( \cdot, j) =
\begin{cases}
\vec{e}_v + \vec{e}_{v + 1} & \text{if } j = v + 1; \\
\vec{e}_j & \text{if } j \in V \setminus \{v, v + 1 \}; \\
\vec{e}_v & \text{if } j \in V' \setminus V.
\end{cases} $$

The purpose of this section is to prove the following proposition.
\begin{proposition}
\label{p:infinitely generated}
Let~$\Gamma$ be a countable additive subgroup of~$\RR$ containing~$1$, but not contained in~$\QQ$.
Then for each~$j \in \NN$ there is a finite subset~$V_j$ of~$\NN$ such that $V_1 = \{ 1, 2 \}$ and such that the following properties hold:
\begin{itemize}
\item[1.]
there is a strictly increasing sequence of integers~$(q_n)_{n = 1}^\infty$ such that~$q_1 = 1$, $q_2 = 3$ and such that for each~$n \in \NN$ and~$j \in [q_{n}, q_{n + 1} - 1]$ we have
$$ V_{j + 1} \setminus V_j \subset [q_{n + 1}, q_{n + 2} - 1]; $$
\item[2.]
for each~$j \in \NN$ the basic matrix~$B(V_j, V_{j + 1})$ is defined and the ordered group
$$ (G, G^+)
\=
\varinjlim \left( \left(\ZZ^{V_j}, (\ZZ^{V_j})^+ \right), B(V_j, V_{j + 1})^T \right)_{j = 1}^\infty $$
together with the order unit~$\left[ \bigl( \begin{smallmatrix} 1 \\ 1 \end{smallmatrix} \bigr), 1 \right]$ is isomorphic to~$(\Gamma, \Gamma \cap \RR^+, 1)$.
\end{itemize}
\end{proposition}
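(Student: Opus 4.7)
My plan is to inductively construct, block by block, finite sets $V_j \subset \NN$, cutoffs $(q_n)_{n \ge 1}$, and strictly positive real vectors $\vec{x}^{(j)} \in \RR^{V_j}$ with coordinates in $\Gamma$, satisfying $\vec{x}^{(j)} = B(V_j, V_{j+1}) \vec{x}^{(j+1)}$ for each $j$. The construction is guided by the multidimensional Euclidean algorithm of~\S\ref{ss:algorithm}: each block implements one iteration of that algorithm on a finitely generated subgroup $\Gamma^{(n)}$ of $\Gamma$, together with the introduction of new generators to build up $\Gamma = \bigcup_{n \ge 1} \Gamma^{(n)}$.

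Since $\Gamma$ is countable, not contained in $\QQ$, and contains $1$, I write $\Gamma$ as an increasing union of finitely generated subgroups $\Gamma^{(n)}$ of ranks $d_n \ge 2$ with $1 \in \Gamma^{(1)}$, by picking an irrational $\gamma \in \Gamma \cap (1/2, 1)$ so that $\Gamma^{(1)} = \langle 1, \gamma \rangle$ has rank $d_1 = 2$ and then adding countably many generators. Set $q_1 = 1$, $q_2 = 3$, $V_1 = \{1, 2\}$, and $\vec{x}^{(1)} = (\gamma, 1 - \gamma)^T$: its coordinates are strictly decreasing, positive, form a base of $\Gamma^{(1)}$, and sum to $1$. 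The candidate order unit $\left[\bigl(\begin{smallmatrix}1\\1\end{smallmatrix}\bigr),1\right]$ will then be sent to $\gamma + (1 - \gamma) = 1$ by the state constructed in Lemma~\ref{l:nexo}.

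Suppose inductively that $q_n$, $V_{q_n} = [q_n, q_{n+1} - 1]$, and $\vec{x}^{(q_n)}$ have been defined so that the coordinates of $\vec{x}^{(q_n)}$ form a rationally independent, strictly decreasing, positive base of $\Gamma^{(n)}$. The invariant $\min V_j = j$ is maintained throughout, so each basic step $B(V_j, V_{j+1})$ preserves the coordinates $x^{(j)}_w$ for $w \in V_j \setminus \{j\}$ and allows the new coordinates $x^{(j+1)}_{v'}$ (for $v' \in V_{j+1} \setminus V_j$) to be chosen as any non-negative reals summing to $x^{(j)}_j - x^{(j)}_{j+1}$. Within block $n$ I use this freedom to realize one iteration of the Euclidean algorithm of~\S\ref{ss:algorithm} applied to $\vec{x}^{(q_n)}$---decomposing the resulting admissible matrix $A(\vec{a}^{(n)}, \sigma_n)$ from Lemma~\ref{l:characteristic property} as an ordered product of basic matrices by spreading the integer entries of $\vec{a}^{(n)}$ across the successive basic steps---and at the last basic step I additionally append new coordinates encoding a rationally independent generating set of $\Gamma^{(n+1)}$ over $\Gamma^{(n)}$. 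Placing the $d_{n+1}$ new indices thus introduced consecutively starting at $q_{n+1}$ and setting $q_{n+2} = q_{n+1} + d_{n+1}$, we have $V_{q_{n+1}} = [q_{n+1}, q_{n+2} - 1]$, and condition~(1) holds by construction.

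Finally, property~(2) follows from Lemma~\ref{l:nexo}: the inverse limit $\varprojlim((\RR^+)^{V_j}, B(V_j, V_{j+1}))$ is the ray spanned by $(\vec{x}^{(j)})_{j \ge 1}$, by a Hilbert projective metric contraction argument analogous to that of Lemma~\ref{l:definite contraction}---over two consecutive blocks the product of basic matrices is strictly positive with uniformly bounded Hilbert diameter. Consequently $\phi([\vec{v}, j]) = \langle \vec{v}, \vec{x}^{(j)} \rangle$ is the unique state of $(G, G^+)$ up to a scalar factor. Since every class in $G$ has a representative at some level $q_n$ where $\phi$ is injective (as the coordinates of $\vec{x}^{(q_n)}$ are rationally independent), we have $\inf(G) = 0$; and since those coordinates generate $\Gamma^{(n)}$ with $\bigcup_n \Gamma^{(n)} = \Gamma$, the image satisfies $\phi(G) = \Gamma$. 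Thus $\phi$ yields the required isomorphism of unital ordered groups. The main obstacle I expect is executing the factorization of the admissible matrix as an ordered product of basic matrices of prescribed shape while simultaneously inserting the new generators of $\Gamma^{(n+1)}$ and keeping the intermediate coordinate values strictly positive; these constraints are coupled because each basic step can only redistribute the single positive difference $x^{(j)}_j - x^{(j)}_{j+1}$.
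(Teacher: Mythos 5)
Your overall architecture (writing $\Gamma$ as an increasing union of finitely generated subgroups $\Gamma^{(n)}$, driving the construction with the Euclidean algorithm of~\S\ref{ss:algorithm}, and concluding via Lemma~\ref{l:nexo}) matches the paper's, but the execution has a genuine gap exactly at the point you flag as ``the main obstacle.'' You propose to perform \emph{one} iteration of the algorithm per block and to factor the resulting admissible matrix $A(\vec{a}^{(n)}, \sigma_n)$ directly into basic matrices. No such factorization is established, and the only factorization tool available (Lemma~\ref{l:basic generate}) does not apply: it requires the matrix to be strictly positive, strictly decreasing up to a permutation, and to satisfy $M(1,1) \ge 5$, whereas an admissible matrix is never strictly positive (its columns $\sum_{k \le \sigma(j)} \vec{e}_k$ for $j \ge 2$ contain zeros) and is not strictly decreasing. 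The paper avoids this entirely: within each block it iterates the algorithm on $\Gamma_{\ell+1}$ sufficiently many times (Part~2 of Lemma~\ref{l:augmenting generator}) so that the \emph{accumulated} matrix $M_\ell$ expressing the old base $\vec{y}^{(\ell)}$ in the new one becomes strictly positive, strictly decreasing up to a permutation, with $M_\ell(1,1) \ge 5$, and only then factors $M_\ell$ into basic matrices. This same lemma (Part~1) is also what guarantees that the new generators of $\Gamma_{\ell+1}$ can be expressed with non-negative integer coordinates, which you need in order to ``append'' them at a basic step whose new coordinates must be non-negative and sum to the single difference $x^{(j)}_j - x^{(j)}_{j+1}$.

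The second gap is in the unique-state step. You assert that over two consecutive blocks the product of basic matrices is strictly positive ``with uniformly bounded Hilbert diameter,'' but with one algorithm iteration per block the bound furnished by Lemma~\ref{l:definite contraction} is $2 \ln d_n$, which diverges when $\Gamma$ has infinite rank; Lemma~\ref{l:showing unique ergodicity} requires a uniform bound $D_0$, and without it the inverse limit need not collapse to a single ray, so the hypothesis of Lemma~\ref{l:nexo} is not verified. The paper again resolves this by iterating within each block until $D(M_\ell) \le 1$ for every $\ell$, using the contraction estimate~\eqref{e:Schwarz-Birkhoff}. Both gaps are repaired by the same device --- run the algorithm long enough inside each block before passing to the next subgroup --- and that device is the actual content of the paper's proof beyond the outline you give.
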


Roughly speaking, the idea of the proof of this proposition is to use recursively the algorithm defined in~\S\ref{ss:algorithm} as in Proposition~\ref{p:finitely generated}, to obtain a sequence of bases of finitely generated subgroups of~$\Gamma$ whose union is equal to~$\Gamma$. 
The inductive step is given by Lemma~\ref{l:augmenting generator} and in Lemma~\ref{l:basic generate} we prove that each of the matrices used in this process can be written as a product of basic matrices.

Given integers~$d \ge 2$ and $d' \ge 1$ we say that a $d \times d'$ matrix $M$ is \emph{strictly decreasing} if each of its columns is and if for each $j \in \{ 1, \ldots, d' - 1 \}$ the vector $M(\cdot, j) - M(\cdot, j + 1)$ is also strictly decreasing.
Furthermore, we will say that~$M$ is \emph{strictly decreasing up to a permutation} if there is a permutation~$\tau$ of~$\{1, \ldots, d' \}$ fixing~$1$ and such that the matrix~$M'$ defined for $j \in \{1, \ldots, d' \}$ by $M'(\cdot, j) = M(\cdot, \tau(j))$ is strictly decreasing.

\begin{lemma}
\label{l:augmenting generator}
Let~$\Gamma'$ be a finitely generated additive subgroup of~$\RR$ containing~1 and of rank $d' \ge 2$.
Given a non-increasing vector~$\vec{x}^{(0)} \in \RR^{d'}$ whose coordinates are strictly positive and form a base of~$\Gamma'$, let~$(\vec{x}^{(n)})_{n = 1}^\infty$ be given by the iteration of the algorithm defined in~\S\ref{ss:algorithm}, as in~\S\ref{ss:finitely generated} with~$\Gamma = \Gamma'$ and~$d' = d$.
Then the following properties hold.
\begin{enumerate}
\item[1.]
Let~$y$ be a strictly positive element of~$\Gamma'$ and for each~$n \in \NN$ let~$\vec{v}^{(n)}$ be the unique vector in~$\ZZ^{d'}$ such that~$\langle \vec{v}^{(n)}, \vec{x}^{(n)} \rangle = y$.
Then for every sufficiently large~$n$ the vector~$\vec{v}^{(n)}$ has strictly positive coordinates, the first being the largest.
\item[2.]
Let~$d \ge 2$ be an integer and let~$\vec{y}$ be a strictly decreasing vector in~$\RR^{d}$ having all of its coordinates strictly positive and in~$\Gamma'$.
For each~$n \in \NN$ let~$M_n$ be the unique integer $d \times d'$ matrix such that~$\vec{y} = M_n \vec{x}^{(n)}$.
Then for every sufficiently large integer~$n$ the matrix~$M_n$ is strictly positive and strictly decreasing up to a permutation and satisfies~$M_n(1, 1) \ge 5$ and $D(M_n) \le 1$.
\end{enumerate}
\end{lemma}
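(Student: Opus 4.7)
The approach is to view both parts through the products $Q_n := A(\vec{a}^{(1)}, \sigma_1) \cdots A(\vec{a}^{(n)}, \sigma_n)$, which by construction satisfy $\vec{x}^{(0)} = Q_n \vec{x}^{(n)}$. Uniqueness of integer expansions then yields
$$ \vec{v}^{(n)} = Q_n^T \vec{v}^{(0)}
\quad\text{and}\quad
M_n = M_0 Q_n, $$
where $\vec{v}^{(0)} \in \ZZ^{d'}$ and $M_0$ are the integer coordinates of $y$ and $\vec{y}$ in the basis $\vec{x}^{(0)}$. By Lemma~\ref{l:definite contraction} and an iteration of the Schwarz-Birkhoff inequality~\eqref{e:Schwarz-Birkhoff} applied to consecutive pairs of admissible factors, $D(Q_n)$ decays geometrically to zero. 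Combined with Lemma~\ref{l:showing unique ergodicity} and the identity $Q_n \vec{x}^{(n)} = \vec{x}^{(0)}$, this implies that, setting $c_j^{(n)} := Q_n(1, j) / x_1^{(0)}$, one has $c_j^{(n)} \to \infty$ and the normalized columns $Q_n(\cdot, j)/c_j^{(n)}$ converge entrywise to $\vec{x}^{(0)}$.

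For Part~1 this gives $\vec{v}_j^{(n)} = \langle Q_n(\cdot, j), \vec{v}^{(0)} \rangle = c_j^{(n)}(y + o(1))$, so every coordinate of $\vec{v}^{(n)}$ is strictly positive for $n$ large. For the first coordinate being the largest I would exploit the coordinatewise domination $\vec{a} \geq \sum_{k = 1}^{\sigma(j)}\vec{e}_k$ built into admissible matrices: for every $j \geq 2$ one has $Q_n(\cdot, 1) - Q_n(\cdot, j) = Q_{n-1}\vec{w}_{n, j}$ with $\vec{w}_{n, j} := \vec{a}^{(n)} - A(\vec{a}^{(n)}, \sigma_n)(\cdot, j) \in (\ZZ^{d'})^+$, and the last coordinate of $\vec{w}_{n, j}$ equals $a_{d'}^{(n)} = 1$ because $\sigma_n(j) \le d' - 1$. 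Dualizing gives
$$ \vec{v}_1^{(n)} - \vec{v}_j^{(n)}
=
\langle \vec{w}_{n, j}, \vec{v}^{(n-1)} \rangle, $$
which is strictly positive as soon as $\vec{v}^{(n-1)}$ is, i.e., by the previous step.

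For Part~2 the same convergence gives $M_n(\cdot, j) = c_j^{(n)}(\vec{y} + o(1))$ entrywise. Since $\vec{y}$ is strictly positive and strictly decreasing and $c_j^{(n)} \to \infty$, for $n$ large the matrix $M_n$ is strictly positive, $M_n(1, 1) \approx c_1^{(n)}y_1$ eventually exceeds $5$, and $D(M_n) \le 1$ because every column of $M_n$ lies in a shrinking projective neighborhood of $\vec{y}$. A direct computation of the first row of $Q_n$ for $n \geq 2$, using $\sigma_n(1) = d'$ and the injectivity of $\sigma_n$, shows the integers $Q_n(1, j)$ are pairwise distinct, so there is a unique permutation $\tau_n$ of $\{1, \ldots, d'\}$ for which $(c_{\tau_n(j)}^{(n)})_j$ is strictly decreasing, and the domination from Part~1 forces $\tau_n(1) = 1$. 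After dividing each reordered column $M_n(\cdot, \tau_n(j))$ by $c_{\tau_n(j)}^{(n)}$, the two strict-decrease conditions in the definition of ``strictly decreasing'' reduce to $y_i > y_{i + 1}$ and $c_{\tau_n(j)}^{(n)} > c_{\tau_n(j + 1)}^{(n)}$, and so hold for all sufficiently large $n$.

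The principal delicacy is this last point: upgrading projective convergence in the Hilbert metric to the \emph{entrywise} strict inequalities required by the definition. Working with the normalized columns $Q_n(\cdot, j)/c_j^{(n)}$, the relative errors decay geometrically by iteration of~\eqref{e:Schwarz-Birkhoff}, and this geometric decay eventually dominates both the fixed gap $y_i - y_{i + 1}$ and, after rescaling, the gaps between consecutive $c_{\tau_n(j)}^{(n)}$'s, so strictness is preserved for large $n$.
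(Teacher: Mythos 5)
Most of your argument is sound and takes a genuinely more analytic route than the paper: you get positivity of $\vec{v}^{(n)}$ and of $M_n$ from the projective contraction of the products $Q_n$ (the paper instead gets non-negativity from the order isomorphism of Proposition~\ref{p:finitely generated} and then applies two exact steps of the algorithm), and your treatment of ``first coordinate largest'', of $M_n(1,1)\ge 5$ and of $D(M_n)\le 1$ is fine. The gap is exactly at the point you flag as the principal delicacy: the claim that the permuted consecutive column differences of $M_n$ are strictly decreasing vectors cannot be extracted from the Hilbert-metric convergence. Writing $M_n(\cdot,\tau_n(j)) = c_{\tau_n(j)}^{(n)}\bigl(\vec{y} + \vec{\delta}_j^{(n)}\bigr)$ with $\|\vec{\delta}_j^{(n)}\| = O(D(Q_n))$, the difference of consecutive permuted columns is
$$ \bigl(c_{\tau_n(j)}^{(n)} - c_{\tau_n(j+1)}^{(n)}\bigr)\vec{y} + c_{\tau_n(j)}^{(n)}\vec{\delta}_j^{(n)} - c_{\tau_n(j+1)}^{(n)}\vec{\delta}_{j+1}^{(n)}, $$
so the error is negligible only if $D(Q_n)$ is small compared with the \emph{relative} gap $\bigl(c_{\tau_n(j)}^{(n)} - c_{\tau_n(j+1)}^{(n)}\bigr)/c_{\tau_n(j)}^{(n)}$. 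That relative gap equals $c^{(n-1)}_{d'+1-j}\big/\sum_{k = 1}^{d'+1-j} c^{(n-1)}_k$ and is governed by the coefficient vectors $\vec{a}^{(m)}$ produced by the algorithm; these play the role of partial quotients and are unbounded in general, so the relative gap can be far smaller than $D(Q_n)$, which decays only at the fixed rate $\tanh(\ln d'/2)$ per two steps. Hence ``the geometric decay eventually dominates the gaps between consecutive $c$'s'' is unjustified, and false for generic inputs.

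The conclusion is nevertheless correct, and the exact identity you already use in Part~1 delivers it without any approximation: with $\tau_n(j) = \sigma_n^{-1}(d'+1-j)$ one has, for $j \ge 2$, $A(\vec{a}^{(n)},\sigma_n)(\cdot,\tau_n(j)) - A(\vec{a}^{(n)},\sigma_n)(\cdot,\tau_n(j+1)) = \vec{e}_{d'+1-j}$, hence $M_n(\cdot,\tau_n(j)) - M_n(\cdot,\tau_n(j+1)) = M_{n-1}(\cdot,d'+1-j)$ is a single column of $M_{n-1}$; for $j = 1$ the difference is a non-negative integer combination of columns of $M_{n-1}$ with at least one coefficient at least~$1$. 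Each column of $M_{n-1}$ is strictly decreasing once $n-1$ is large --- apply Part~1 to the strictly positive elements $y_i - y_{i+1}$ of $\Gamma'$, which is how the paper argues --- and a non-negative, non-trivial combination of strictly decreasing vectors is strictly decreasing. Substituting this algebraic step for your approximation argument closes the gap; the rest of your proof can stand as written.
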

\begin{proof}
For each~$n \in \NN$ let~$\vec{a}^{(n)}$ and~$\sigma_n$ be given by the iteration of the algorithm defined in~\S\ref{ss:algorithm}, as in~\S\ref{ss:finitely generated} with~$\Gamma = \Gamma'$ and~$d = d'$.

\partn{1}
Let~$y \in \Gamma'$ be such that~$y > 0$ and for each~$n \in \NN$ let~$\vec{v}^{(n)}$ be such that~$\langle \vec{v}^{(n)}, \vec{x}^{(n)} \rangle = y$.
Let~$\phi$ be given by Proposition~\ref{p:finitely generated}, so that for every~$n \in \NN$ we have~$\phi( [\vec{v}^{(n)}, n]) = y$.
Since~$y > 0$, for every sufficiently large~$m$ we have~$\vec{v}^{(m)} \in (\ZZ^{d'})^+$.
It follows that for such~$m$ the first coordinate of~$\vec{v}^{(m + 1)} = A(\vec{a}^{(m)}, \sigma_{m})^T \vec{v}^{(m)}$ is strictly positive and that~$\vec{v}^{(m + 2)} = A(\vec{a}^{(m + 1)}, \sigma_{m + 1})^T \vec{v}^{(m + 1)}$ has all of its coordinates strictly positive, the first being the largest.
This proves the desired assertion.

\partn{2}
By Part~1 for every sufficiently large~$m$ and each $j \in \{1, \ldots, {d} \}$, the unique vector~$\vec{v} \in \ZZ^{d'}$ such that~$\langle \vec{v}, \vec{x}^{(m)} \rangle = y_j$ is strictly positive.
For each~$j \in \{1, \ldots, {d} - 1 \}$ the same holds for~$y_{j + 1} - y_j > 0$.
Thus there is~$m_0 \in \NN$ such that for every integer~$m \ge m_0$ the matrix~$M_m$ is strictly positive and each of its columns is strictly decreasing.
We will prove now that for $m \ge m_0 + 1$ the matrix
$$ M_m = M_{m - 1} A (\vec{a}^{(m)}, \sigma_m) $$
is in addition strictly decreasing up to a permutation.
Let~$\tau$ be the permutation of~$\{1, \ldots, d' \}$ defined by~$\tau(j) = \sigma_m^{-1}(d' + 1 - j)$, so that~$\tau$ fixes~$1$.
Since each of the columns of~$M_{m - 1}$ is strictly decreasing, the vector
$$ M_m(\cdot, \tau(1)) - M_m(\cdot, \tau(2))
=
a^{(m)}_{d'} M_{m - 1}(\cdot, d') + \sum_{j = 1}^{d' - 1} \left( a^{(m)}_j - 1 \right) M_{m - 1}(\cdot, j) $$
and, if~$d' \ge 3$, for each~$j \in \{2, \ldots, d' - 1 \}$ the vector
$$ M_m( \cdot, \tau(j)) - M_m(\cdot, \tau(j + 1))
=
M_{m - 1}(\cdot, d' + 1 - j)$$
is strictly decreasing.
This proves that the matrix~$M_m$ is strictly decreasing up to a permutation.
It remains to show that for large~$m$ we have~$M_m(1, 1) \ge 5$ and~$D(M_m) \le 1$.
Let~$m \ge m_0 + 1$ be given.
Since~$M_m$ is integer, strictly positive and strictly decreasing up to a permutation, we have~$M_m(1, 1) \ge d \cdot d' \ge 4$.
Thus
$$ M_{m + 1}(1, 1)
=
\sum_{j = 1}^{d'} \vec{a}^{(m + 1)}_j M_m(1, j) \ge M_m(1, 1) + M_m(1, d')
\ge
5. $$
On the other hand, applying repeatedly Lemma~\ref{l:definite contraction} and~\eqref{e:Schwarz-Birkhoff} we have for each integer~$\ell \ge 1$,
\begin{multline*}
D(M_{m + 2\ell + 1})
=
D \left( M_m A \left( \vec{a}^{(m + 1)}, \sigma_{m + 1} \right) \cdots A \left( \vec{a}^{(m + 2\ell + 1)}, \sigma_{m + 2 \ell + 1} \right) \right)
\\ \le
\left(\tanh(\ln d'/ 2) \right)^{\ell} (2 \ln d').
\end{multline*}
This proves the desired assertion and completes the proof of the lemma.
\end{proof}

Let~$V, V', \widetilde{V}, \widetilde{V}' \subset \NN \cup \{ 0 \}$ be finite sets such that $\# V = \# \widetilde{V}$ and $\# V' = \# \widetilde{V}'$ and let~$\tau : V \to \widetilde{V}$ and $\tau' : V' \to \widetilde{V}'$ be increasing bijections.
Then we say that a $V \times V'$ matrix~$M$ and a $\widetilde{V} \times \widetilde{V}'$ matrix~$\widetilde{M}$ are \emph{equal up to increasing re-indexing} if for each~$(j, j') \in V \times V'$ we have~$\widetilde{M}(j, j') = M(\tau(j), \tau(j'))$.


\begin{lemma}
\label{l:basic generate}
Let~$d, d' \ge 2$ be integers and let~$M$ be an integer and strictly positive $d \times d'$ matrix which is strictly decreasing up to a permutation.
Suppose furthermore that~$M(1, 1) \ge 5$, so that
$$ k \= d - 3 + M(1, 1) \ge d + 2. $$
Then there are finite subsets~$W_0, \ldots, W_{k}$ of~$\NN \cup \{ 0 \}$ with~$W_0 = [0, d - 1]$ and~$W_k = [k, k + d' - 1]$, such that the following properties hold.
\begin{enumerate}
\item[1.]
For~$j \in [0, d - 1]$ we have~$W_{j + 1} \setminus W_j \subset [d, k - 1]$ and for~$j \in [d, k - 1]$ we have~$W_{j + 1} \setminus W_j \subset [k, k + d' - 1]$.
\item[2.]
For each~$j \in [0, k - 1]$ the basic matrix~$B(W_j, W_{j + 1})$ is defined and the product
$$ B(W_0, W_1) \cdots B(W_{k - 1}, W_{k}) $$
is equal to~$M$ up to increasing re-indexing.
\end{enumerate}
\end{lemma}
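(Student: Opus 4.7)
The plan is to construct the sets $W_0,\ldots,W_k$ explicitly in two phases matching the two conditions in item~1. First I apply the permutation $\tau$ of $[1,d']$ fixing~$1$ from the hypothesis to pass to the strictly decreasing matrix $\widetilde M$ given by $\widetilde M(\cdot,j) = M(\cdot,\tau(j))$; the permutation $\tau$ will be absorbed into the order in which elements of $W_k = [k,k+d'-1]$ are introduced in phase~2, so that the product ends up equal to $M$ itself (not $\widetilde M$) up to the standard increasing re-indexing. Phase~1 (steps $0$ through $d-1$) adds new vertices from $[d,k-1]$ and builds up a supply of ``staircase'' main columns, while phase~2 (steps $d$ through $k-1$) adds new vertices from $[k,k+d'-1]$, each realizing one column of~$M$ at the corresponding position $j' \in W_k$.

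The main computation I need is the column-by-column behavior of the partial product $N_j \= B(W_0,W_1)\cdots B(W_{j-1},W_j)$. Setting $C_j \= N_j(\cdot,j)$ (the main column at time~$j$), a direct verification from the definition of $B(V,V')$ gives $C_0 = \vec{e}_0$ and the recursion
\[
C_j = C_{j-1} + N_{j-1}(\cdot,j),
\]
where $N_{j-1}(\cdot,j) = \vec{e}_j$ if $j \in W_0$, and $N_{j-1}(\cdot,j) = C_{\beta(j)-1}$ if $j$ is first introduced as a new vertex at some step $\beta(j) \ge 1$ (the corresponding column is then carried forward unchanged by identity transitions until time~$j$). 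Consequently, after phase~1 one has $C_t = \vec{e}_0+\cdots+\vec{e}_t$ for $t \in [0,d-1]$, the subsequent $C_j$ for $j \in [d,k-1]$ are sums of these staircase vectors $S_t$ dictated by the phase-1 birth-time choices, and the columns of~$N_k$ are read off directly: the column at position~$k$ is $C_k = C_{k-1}+C_{\beta(k)-1}$, and the column at position $j' \in (k,\,k+d'-1]$ is $C_{\beta(j')-1}$.

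I then invert this computation. In phase~1 I choose the birth times $\beta(j) \in [1,d]$ of the vertices $j \in W_d \subset [d,k-1]$ so that $C_d,\ldots,C_{k-1}$ contains the $d'-1$ smaller columns $\widetilde M(\cdot,2),\ldots,\widetilde M(\cdot,d')$ of $\widetilde M$; since $\widetilde M$ is strictly decreasing, each successive column difference is strictly decreasing and can be written as a non-negative integer combination of the staircase vectors $S_t$, which is precisely what the recursion realizes. In phase~2 I choose the birth steps of the elements of $W_k$ so that each new vertex copies the correct main column, and I arrange $\beta(k)$ so that $C_k = C_{k-1}+C_{\beta(k)-1}$ equals $M(\cdot,1)$. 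The hypothesis $M(1,1) \ge 5$, giving $k-d = M(1,1)-3 \ge 2$, is used here to guarantee enough room in phase~2 for the $d'$ required distinct birth times.

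The main obstacle is the bookkeeping in phase~1: expressing each column difference of $\widetilde M$ as a non-negative combination of the staircase vectors $S_t$, and translating this expression into a valid sequence of birth times $\beta(j)$ satisfying $\beta(j) \in [1,d]$ and $\beta(j) \le j$ with the correct ordering constraints. Once these birth times are fixed, the sets $W_j$ are determined (each new vertex $j \in [d,k-1]$ is placed into $W_{\beta(j)}$ and then carried forward until step~$j$, and similarly for phase~2), and the proof concludes by substituting the choices into the column recursion and verifying that the resulting product of basic matrices equals~$M$ up to the standard increasing re-indexing from $W_0 \times W_k$ to $[1,d] \times [1,d']$.
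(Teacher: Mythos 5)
Your plan is the same construction as the paper's: two phases, the partial--product recursion $C_j = C_{j-1} + N_{j-1}(\cdot,j)$ for the main column, staircase vectors born in phase~1 and copies of main columns born in phase~2. The mechanism is right, but the part you set aside as ``bookkeeping'' contains the two consistency checks that make the construction close up, and your stated reason for the hypothesis $M(1,1)\ge 5$ indicates you have not carried them out. First, the count is rigid: every vertex of $[d,k-1]$ is born in phase~1 and contributes exactly one staircase vector $S_{\beta(j)-1}$, each of which has $0$-th coordinate $1$, so the $0$-th coordinate of $C_j$ is forced to equal $j-d+2$ for $j\in[d,k-1]$. Comparing $0$-th coordinates in $C_k=C_{k-1}+C_{\beta(k)-1}=M(\cdot,1)$ then \emph{forces} $\beta(k)=d+1$, hence $C_{\beta(k)-1}=C_d$ and $C_{k-1}=M(\cdot,1)-C_d$. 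So the endpoint of the phase-1 partial sums is an additional target, not something you can ``arrange $\beta(k)$'' for afterwards; you must add a final block and check that $\widetilde M(\cdot,1)-\widetilde M(\cdot,2)-\bigl(2\vec e_0+\sum_{s=1}^{d-1}\vec e_s\bigr)$ is still non-negative and non-increasing (this is the paper's vector $\vec a^{(d')}$). Second, the total number of staircase summands in all your blocks must equal the number of available vertices, $k-d=M(1,1)-3$; this holds by the telescoping identity
$$ 1+\bigl(\widetilde M(0,d')-2\bigr)+\sum_{t=2}^{d'-1}\bigl(\widetilde M(0,t)-\widetilde M(0,t+1)\bigr)+\bigl(\widetilde M(0,1)-\widetilde M(0,2)-2\bigr)=M(1,1)-3, $$
and it is the real reason for the specific value of $k$.

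Relatedly, the hypothesis $M(1,1)\ge 5$ is not about having ``$d'$ required distinct birth times'' in phase~2: several vertices of $W_k$ may share a birth step, and $k-d\ge 2$ would not give you $d'$ steps in any case. Its role is to guarantee $k\ge d+2$, i.e.\ that $d+1\in W_d=[d,k-1]$, without which the basic matrix $B(W_d,W_{d+1})$ is not even defined. Once the forced endpoint constraint and the exact count above are in place, the remaining ordering constraints really are routine (within each block the order of births is irrelevant, and $\beta(j)\le\min(j,d)$ is automatic), and your argument then coincides with the paper's explicit construction of the sets $W_j$.
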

Recall that for~$n, m \in \ZZ$ with~$n \ge m + 1$ we use~$[n, m]$ to denote the empty set.
\begin{proof}
Let~$\tau$ be the permutation of $\{1, \ldots, d' \}$ fixing~$1$ and such that the $d \times d'$ matrix~$M'$ defined for $t \in [1, d']$ by
$$ M'( \cdot, t) = M(\cdot, \tau(t)), $$
is strictly decreasing.

It will be convenient to index the column vectors of~$M$ and of~$M'$ on $[0, d - 1]$ instead of $[1, d]$.
So we define the $[0, d - 1] \times [1, d']$ matrices $\widetilde{M}$ and~$\widetilde{M}'$, for $(s, t) \in [0, d - 1] \times [1, d']$ by
$$ \widetilde{M}(s, t) = M(s + 1, t)
\text{ and }
\widetilde{M}'(s, t) = M'(s + 1, t). $$

After some preliminary definitions in Part~1, we define the sets~$W_j$ in Part~2 and then prove the desired properties in Part~3.

\partn{1}
The vector
$$ \vec{a}^{(1)} \= \widetilde{M}'(\cdot, d') = \widetilde{M}(\cdot, \tau(d')) $$
and, if~$d' \ge 3$, for each $t \in \{2, \ldots, d' - 1 \}$ the vector
\begin{multline*}
\vec{a}^{(t)}
\=
\widetilde{M}'(\cdot, d' + 1 - t) - \widetilde{M}'(\cdot, d' + 2 - t)
\\ =
\widetilde{M}(\cdot, \tau(d' + 1 - t)) - \widetilde{M}(\cdot, \tau(d' + 2 - t))
\end{multline*}
is strictly positive and strictly decreasing.
In particular, for each $t \in \{1, \ldots, d' - 2 \}$ we have $a_0^{(t)} \ge d \ge 2$.

On the other hand, the vector
$$ \widetilde{M}'(\cdot, 1) - \widetilde{M}'(\cdot, 2)
=
\widetilde{M}(\cdot, \tau(1)) - \widetilde{M}(\cdot, \tau(2)) $$
is also strictly positive and strictly decreasing.
So the vector,
$$ \vec{a}^{(d')}
\=
\widetilde{M}(\cdot, \tau(1)) - \widetilde{M}(\cdot, \tau(2)) - \left( 2 \vec{e}_0 + \sum_{s = 1}^{d - 1} \vec{e}_s \right) $$
is non-negative and non-increasing.
Note that
$$ \sum_{t = 1}^{d'} a_0^{(t)} = \widetilde{M}(0, 1) - 2. $$

\partn{2}
Put $W_0 \= [0, d - 1]$ and for $j \in [1, d - 1]$,
\begin{multline*}
W_j
\=
\left[ j, d - 1 + a_0^{(1)} - a_j^{(1)} \right]
\\ \cup
\bigcup_{t_0 = 2}^{d'} \left[ d - 1 + \sum_{t = 1}^{t_0 - 1} a_0^{(t)}, d - 2 + \sum_{t = 1}^{t_0} a_0^{(t)} - a_j^{(t)} \right].
\end{multline*}
Put also
$$ W_d
\=
\left[d, d - 4 + \widetilde{M}(0, 1) \right]
= [d, k - 1] $$
and if $a_0^{(1)} \ge 3$, then for $j \in \left[ d + 1, d - 2 + a_0^{(1)} \right]$ put
$$ W_j
\=
\left[ j, d - 3 + \widetilde{M}(0, 1) \right]
= [j, k]. $$
For $t_0 \in [2, d']$ and $j \in \left[ d - 1 + \sum_{t = 1}^{t_0 - 1} a_0^{(t)}, d - 2 + \sum_{t = 1}^{t_0} a_0^{(t)} \right]$ put
\begin{multline*}
W_j
\=
\left[j, d - 3 + \widetilde{M}(0, 1) \right] \cup
\left(d - 4 + \widetilde{M}(0, 1) + \tau([d' + 2 - t_0, d']) \right)
\\ =
[j, k] \cup \left( k - 1 + \tau([d' + 2 - t_0, d'] \right).
\end{multline*}
Finally, put~$W_k = [k, k + d' - 1]$ and note that~$W_{k - 1} = [k - 1, k + d' - 1]$.

\partn{3}
It is straightforward to check Part~1 of the lemma.
To prove Part~2, first observe that by definition for each~$j \in [0, k]$ the minimal element of~$W_j$ is~$j$.
Furthermore, for~$j \in [0, k - 1]$ we also have~$j + 1 \in W_j$ and~$W_j \setminus \{ j \} \subset W_{j + 1}$.
This shows that the basic matrix~$B(W_j, W_{j + 1})$ is defined.
To calculate the product of these matrices we start observing that by a direct computation we have for each $j \in [1, d - 1]$
\begin{multline*}
B(W_0, W_1) \cdots B(W_{j - 1}, W_j)(\cdot, \ell)
\\ =
\begin{cases}
\sum_{s = 0}^j \vec{e}_s & \text{if } \ell = j; \\
\vec{e}_\ell & \text{if } \ell \in [j + 1, d - 1]; \\
\sum_{s = 0}^{s_0 - 1} \vec{e}_s & \text{if } \ell \in W_{s_0} \setminus W_{s_0 - 1} \text{ and } s_0 \in [1, j];
\end{cases}
\end{multline*}
and hence that
\begin{multline}
  \label{e:first product}
B(W_0, W_1) \cdots B(W_{d - 1}, W_d)(\cdot, \ell)
\\ =
\begin{cases}
2 \vec{e}_0 + \sum_{s = 0}^{d - 1} \vec{e}_s & \text{if } \ell = d; \\
\vec{e}_0 & \text{if } \ell \in (W_1 \setminus W_0) \setminus \{ d \}; \\
\sum_{s = 0}^{s_0 - 1} \vec{e}_s & \text{if } \ell \in W_{s_0} \setminus W_{s_0 - 1} \text{ and } s_0 \in [2, d].
\end{cases}
\end{multline}

On the other hand, if~$a_0^{(1)} \ge 3$, then for $j \in [d + 1, d - 2 + a_0^{(1)}]$ we have
\begin{multline*}
B(W_d, W_{d + 1}) \cdots B(W_{j - 1}, W_j)(\cdot, \ell)
=
\begin{cases}
\sum_{s = d}^j \vec{e}_s & \text{if } \ell = j; \\
\vec{e}_\ell & \text{if } \ell \in [j + 1, k - 1]; \\
\vec{e}_d & \text{if } \ell = k.
\end{cases}
\end{multline*}
In all cases, for~$t_0 \in [2, d']$ and $j \in \left[ d - 1 + \sum_{t = 1}^{t_0 - 1} a_0^{(t)}, d - 2 + \sum_{t = 1}^{t_0} a_0^{(t)} \right]$ we have
\begin{multline*}
B(W_d, W_{d + 1}) \cdots B(W_{j - 1}, W_j)(\cdot, \ell)
\\ =
\begin{cases}
\sum_{s = d}^j \vec{e}_s & \text{if } \ell = j; \\
\vec{e}_\ell & \text{if } \ell \in [j + 1, k - 1]; \\
\vec{e}_d & \text{if } \ell = k; \\
\sum_{s = d}^{d - 2 + \sum_{t' = 1}^{t - 1} a_0^{(t')}} \vec{e}_s & \text{if } t \in [2, t_0] \text{ and } \ell = k - 1 + \tau(d' + 2 - t).
\end{cases}
\end{multline*}
Combining the above with~$t_0 = d'$ and~$j = d - 2 + \sum_{t = 1}^{d'} a^{(t)}_0 = k - 1$ and~\eqref{e:first product} we obtain,
\begin{multline*}
B(W_0, W_1) \cdots B(W_{k - 2}, W_{k - 1})(\cdot, \ell)
\\ =
\begin{cases}
\sum_{t = 1}^{d'} \vec{a}^{(t)} & \text{if } \ell = k - 1; \\
2 \vec{e}_0 + \sum_{s = 1}^{d - 1} \vec{e}_s & \text{if } \ell = k; \\
\sum_{t = 1}^{t_0 - 1} \vec{a}^{(t)} & \text{if } t_0 \in [2, d'] \text{ and } \ell = k - 1 + \tau(d' + 2 - t_0).
\end{cases}
\end{multline*}
Notice that
$$ \sum_{t = 1}^{d'} \vec{a}^{(t)}
=
\widetilde{M}( \cdot, 1) - \left( 2 \vec{e}_0 + \sum_{s = 1}^{d - 1} \vec{e}_s \right)
$$
and that for $t_0 \in [2, d']$ and $\ell = k - 1 + \tau(d' + 2 - t_0)$ we have
$$ \sum_{t = 1}^{t_0 - 1} \vec{a}^{(t)}
=
\widetilde{M}(\cdot, \tau(d' + 2 - t_0))
=
\widetilde{M}(\cdot, \ell - (k - 1)). $$
Thus
\begin{multline*}
B(W_0, W_1) \cdots B(W_{k - 2}, W_{k - 1})(\cdot, \ell)
\\ =
\begin{cases}
\widetilde{M}( \cdot, 1) - \left( 2 \vec{e}_0 + \sum_{s = 1}^{d - 1} \vec{e}_s \right) & \text{if } \ell = k - 1; \\
2 \vec{e}_0 + \sum_{s = 1}^{d - 1} \vec{e}_s & \text{if } \ell = k; \\
\widetilde{M}(\cdot, \ell - (k - 1)) & \text{if } \ell \in [k + 1, k + d' - 1].
\end{cases}
\end{multline*}
A direct computation then shows that for each~$\ell \in W_k = [k, k + d' - 1]$ we have
$$ B(W_0, W_1) \cdots B(W_{k - 1}, W_{k})(\cdot, \ell)
=
\widetilde{M}(\cdot, \ell - (k - 1)). $$
This proves that the matrix $B(W_0, W_1) \cdots B(W_{k - 1}, W_{k})$ is equal to~$\widetilde{M}$, and hence to~$M$, up to an increasing re-indexing.
\end{proof}

\begin{proof}[Proof of Proposition~\ref{p:infinitely generated}]
Let~$\alpha \in \Gamma$ be an irrational number in the interval $(0, 1/2)$ and put~$\alpha_0 = 1$ and $\alpha_1 \= \alpha$.
For an integer~$\ell \ge 2$ define inductively~$\alpha_\ell \in \Gamma$ in such a way that
the set $\{ \alpha_\ell : \ell \in \NN \cup \{ 0 \} \}$ generates~$\Gamma$.
For~$\ell \in \NN \cup \{ 0 \}$ let~$\Gamma_\ell$ be the additive subgroup of~$\RR$ generated by $\{ \alpha_0, \ldots, \alpha_\ell \}$ and let~$d_{\ell} \ge 1$ be the rank of~$\Gamma_\ell$.
So $d_0 = 1$, $d_1 = 2$ and the sequence~$(d_\ell)_{\ell = 0}^\infty$ is non-decreasing.

\partn{1}
For each~$\ell \in \NN \cup \{ 0 \}$ define recursively a strictly decreasing vector~$\vec{y}^{(\ell)}$ in~$\RR^{d_\ell}$ whose coordinates are strictly positive and form a base of~$\Gamma_\ell$ and an integer and  strictly positive $d_\ell \times d_{\ell + 1}$ matrix~$M_\ell$ such that
\begin{equation}
\label{e:inductive step}
\vec{y}^{(\ell)} = M_\ell \vec{y}^{(\ell + 1)}
\text{ and }
D(M_\ell) \le 1,
\end{equation}
as follows.
Put~$\vec{y}^{(0)} = (1)$, $\vec{y}^{(1)} = \bigl( \begin{smallmatrix} 1 - \alpha_1 \\ \alpha_1 \end{smallmatrix} \bigr)$ and $M_0 = \bigl( \begin{smallmatrix} 1 & 1 \end{smallmatrix} \bigr)$.
Let~$\ell \in \NN$ be given and assume~$\vec{y}^{(\ell)}$ is already defined.
Let~$\vec{x}^{(0)} \in \RR^{d_{\ell + 1}}$ be a strictly decreasing vector whose coordinates are strictly positive and form a base of~$\Gamma_{\ell + 1}$.
Then by Part~2 of Lemma~\ref{l:augmenting generator} with
$$ \Gamma' = \Gamma_{\ell + 1},
d' = d_{\ell + 1}, 
d = d_\ell
\text{ and } 
\vec{y} = \vec{y}^{(\ell)} $$
there is an integer~$n \in \NN$ and an integer and strictly positive $d_\ell \times d_{\ell + 1}$ matrix $M_{\ell + 1}$ which is strictly decreasing up to a permutation and satisfies~$\vec{y}^{(\ell)} = M_\ell \vec{x}^{(n)}$ and~$D(M_\ell) \le 1$.
Setting~$\vec{y}^{(\ell + 1)} \= \vec{x}^{(n)}$ we have~\eqref{e:inductive step}.
On the other hand, the coordinates of~$\vec{y}^{(\ell + 1)}$ are strictly positive and form a base of~$\Gamma_{\ell + 1}$.

By definition~$(\vec{y}^{(\ell)})_{\ell = 0}^\infty$ belongs to the inverse limit
$$ \varprojlim \left( (\RR^+)^{d_\ell}), M_\ell \right)_{\ell = 0}^\infty, $$
which by Lemma~\ref{l:showing unique ergodicity} it is equal to~$\left\{ (\lambda \vec{y}^{(\ell)})_{\ell = 0}^\infty : \lambda \ge 0 \right\}$.
Then Lemma~\ref{l:nexo} implies that the ordered group
$$ (G, G^+)
\=
\varinjlim \left( (\ZZ^{d_\ell}, (\ZZ^{d_\ell})^+), M_\ell^T \right)_{\ell = 0}^\infty $$
has a unique state~$\phi$ up to a scalar factor, such that for each~$[\vec{v}, \ell] \in G$ we have~$\phi([\vec{v}, \ell]) = \langle \vec{v}, \vec{y}^{(\ell)} \rangle$.
Furthermore, $\phi$ induces an isomorphism between~$(G / \inf(G), (G / \inf(G))^+)$ and $(\phi(G), \phi(G) \cap \RR^+)$.
On the other hand, since for each~$\ell \in \NN$ the coordinates of~$\vec{y}^{(\ell)}$ form a base of~$\Gamma_\ell$, they are rationally independent.
Thus~$\phi(G) = \Gamma$ and~$\phi$ is injective.
Hence the subgroup~$\inf(G)$ of~$G$ is trivial and~$\phi$ is an isomorphism of unital ordered groups between~$(G, G^+, [1, 0])$ and $(\Gamma, \Gamma \cap \RR^+, 1)$.

\partn{2}
By construction for each~$\ell \in \NN$ we have~$d_{\ell}, d_{\ell + 1} \ge 2$ and the matrix~$M_{\ell}$ is integer, strictly positive and strictly decreasing up to a permutation.
Furthermore, it satisfies~$M_{\ell}(1, 1) \ge 5$.
By Lemma~\ref{l:basic generate} we can define by induction a sequence of finite subsets~$(V_j)_{j = 1}^\infty$ of~$\NN$ with~$V_1 = \{ 1, 2 \}$ and a strictly increasing sequence of integers~$(t_\ell)_{\ell = 0}^\infty$ with~$t_1 = 1$ such that the following properties hold.
\begin{enumerate}
\item[1.]
For each~$\ell \in \NN$ we have~$t_{\ell + 1} - t_{\ell} \ge d_{\ell} + 1$ and~$V_{t_\ell} = [t_\ell, t_{\ell} + d_{\ell} - 1]$.
Furthermore, for each~$j \in [t_{\ell}, t_{\ell} + d_{\ell} - 1]$ we have~$V_{j + 1} \setminus V_j \subset [t_{\ell} + d_{\ell}, t_{\ell + 1} - 1]$ and for each~$j \in [t_{\ell} + d_{\ell}, t_{\ell + 1} - 1]$ we have~$V_{j + 1} \setminus V_j \subset [t_{\ell + 1}, t_{\ell + 1} + d_{\ell + 1} - 1]$.
\item[2.]
For each~$j \in \NN$ the basic matrix~$B(V_j, V_{j + 1})$ is defined and for each~$\ell \in \NN$ the product
$$ B(V_{t_\ell}, V_{t_\ell + 1}) \cdots B(V_{t_{\ell + 1} - 1}, V_{t_{\ell + 1}}) $$
is equal to~$M_\ell$ up to increasing re-indexing.
\end{enumerate}
Part~2 of the proposition follows from property~2 above and by what was proved in Part~1 of the proof.
Part~1 of the proposition follows easily from property~1 above, for the sequence~$(q_n)_{n = 1}^\infty$ defined for~$\ell \in \NN$ by~$q_{2\ell - 1} = t_{\ell}$ and~$q_{2 \ell} = t_{\ell} + d_{\ell}$.
\end{proof}

\section{Generalized odometers and postcritical sets}
\label{s:realization}
After recalling the definition of generalized odometers and Bratteli-Vershik systems associated to unimodal maps, we prove in~\S\ref{ss:generalized odometers} that each of the dimension groups constructed in~\S\ref{s:infinitely generated} can be realized as the unital ordered group of a generalized odometer associated to a unimodal map (Theorem~\ref{t:almost-fullness}).
Theorem~\ref{t:fullness} is an easy consequence of this fact.
In~\S\ref{ss:logistic} we show that the generalized odometer constructed in Theorem~\ref{t:almost-fullness} is an extension of the \pcs{} of a logistic map, in such a way that the inverse of the corresponding factor map is defined on the complement of the backward orbit of the critical point (Proposition~\ref{p:projection-to-pcs}).
We end this section with the proofs of Theorem~\ref{t:fullness} and Theorem~\ref{t:semi-equivalence}.
\subsection{Generalized odometers}
\label{ss:generalized odometers}
Let $Q : \NN \cup \{ 0 \} \to \NN \cup \{ 0 \}$ be a map such that~$Q(0) = 0$ and such that for each integer~$k \ge 1$ we have~$Q(k) \le k - 1$.
Let~$(S_k)_{k = 1}^\infty$ be the strictly increasing sequence of integers defined recursively by~$S_0 = 1$ and for $k \ge 1$ by~$S_k = S_{k - 1} + S_{Q(k)}$.

Let~$\Omega_Q$ be the set defined as
\begin{multline*}
\Omega_Q
\=
\{ (x_k)_{k = 0}^\infty \in \{ 0, 1 \}^{\NN \cup \{ 0 \}} : x_k = 1 \text{ implies that }
\\
\text{for } j \in [Q(k + 1), k - 1] \text{ we have } x_j = 0 \}.
\end{multline*}
For each non-negative integer~$n$ there is a unique sequence
$\expansion{n} \= (x_k)_{k =0}^\infty$ in~$\Omega_Q$ that has at most finitely many~$1$'s and~$\sum_{k \ge 0} x_k S_k = n$.
The sequence~$\expansion{n}$ is also characterized as the unique sequence in $\{0, 1 \}^{\NN \cup \{ 0 \}}$ with finitely many~$1$'s such that $\sum_{k \ge 0} x_k S_k = n$ and such that, if  $0 \le k_0 < \cdots < k_l$ are all the integers verifying $x_{k_0}= \cdots =x_{k_l}=1$, then $S_{k_l}\leq n \le S_{k_l+1} - 1$ and for each~$j \in [0, l - 1]$ we have
$$ S_{k_j}\leq n - \sum_{s={j+1}}^{l}S_{k_s} \le S_{k_j + 1} - 1. $$

When~$Q(k) \to \infty$ as~$k \to \infty$, the map defined on the subset~$ \{ \expansion{n}
: n \in \NN \}$ of~$\Omega_Q$ by $\expansion{n} \mapsto
\expansion{n + 1}$ extends continuously to a map~$T_Q : \Omega_Q
\to \Omega_Q$ which is onto, minimal and such that~$T_Q^{-1}$ is
well defined on $\Omega_Q \setminus \expansion{0}$;
see~\cite[Lemma~2]{BruKelStP97}.
We call $(\Omega_Q, T_Q)$ the \textit{generalized odometer} associated to~$Q$.

The following definition will be important in what follows.
\begin{definition}
We say that a map $Q:\NN \cup \{ 0 \} \to \NN \cup \{ 0 \}$ such that~$Q(0) = 0$ is \emph{increasing modulo
intervals}, if there exist a strictly increasing sequence of integers~$(q_n)_{n = 0}^\infty$ such that $q_0 = 0$, $q_1 = 1$ and for each~$n \in \NN$
$$ Q([q_n, q_{n + 1} - 1]) \subset [q_{n - 1}, q_n - 1]. $$
\end{definition}
If~$Q : \NN \cup \{ 0 \} \to \NN \cup \{ 0 \}$ is increasing modulo intervals, then~$Q(0) = 0$, for every integer~$k \ge 1$ we have~$Q(k) \le k - 1$ and $Q(k) \to \infty$ as $k \to \infty$.
So the generalized odometer~$(\Omega_Q, T_Q)$ is defined for such~$Q$.
\begin{theorem}
\label{t:almost-fullness}
Let~$\Gamma$ be a countable additive subgroup of~$\RR$ containing~$1$, but not contained in~$\QQ$.
Then there is a map~$Q : \NN \cup \{ 0 \} \to \NN \cup \{ 0 \}$ which is increasing modulo intervals, such that~$Q(0) = Q(1) = 0$, such that for every integer~$k \ge 2$ we have~$Q(k) \le k - 2$ and such that the following property holds: the set~$\Omega_Q$ is a Cantor set and the unital ordered group~$G(\Omega_Q, T_Q)$ associated to~$(\Omega_Q, T_Q)$ is isomorphic to~$(\Gamma, \Gamma \cap \RR^+, 1)$.
\end{theorem}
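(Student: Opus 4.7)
The plan is to combine Proposition~\ref{p:infinitely generated} with the Bratteli--Vershik representation of a generalized odometer introduced by Bruin in~\cite{Bru03}. Specifically, the goal is to realize $(\Gamma, \Gamma \cap \RR^+, 1)$ as a direct limit of transposed basic matrices, and then to produce a map~$Q$ so that these same basic matrices appear as the incidence matrices of Bruin's Bratteli--Vershik representation of $(\Omega_Q, T_Q)$.

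First I would apply Proposition~\ref{p:infinitely generated} to~$\Gamma$ to obtain sequences $(V_j)_{j = 1}^\infty$ and $(q_n)_{n = 1}^\infty$ with the stated properties. Since $B(V_j, V_{j+1})$ is defined for every~$j$, the minimal element of each~$V_j$ is~$j$, one has $\{j, j+1\} \subset V_j$ and $V_j \setminus \{j\} \subset V_{j+1}$; in particular every $k \ge 2$ satisfies $k \in V_{k-1}$. Then I would define $Q : \NN \cup \{0\} \to \NN \cup \{0\}$ by $Q(0) = Q(1) \= 0$ and, for $k \ge 2$, by $Q(k) \= j(k) - 1$, where $j(k) \= \min\{j \ge 1 : k \in V_j\}$. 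Since $j(k) \le k - 1$, we get $Q(k) \le k - 2$; setting $q_0 \= 0$, Property~1 of Proposition~\ref{p:infinitely generated} together with the direct computation $Q(\{1, 2\}) = \{0\}$ shows that $Q$ is increasing modulo intervals with respect to $(q_n)_{n = 0}^\infty$. In particular $Q(k) \to \infty$, so $(\Omega_Q, T_Q)$ is well defined and $\Omega_Q$ is a Cantor set by~\cite{BruKelStP97}.

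The decisive step is to invoke Bruin's construction in~\cite{Bru03}: $(\Omega_Q, T_Q)$ admits a Bratteli--Vershik representation as a simple ordered Bratteli diagram with a unique minimal path, whose vertex set at level~$j$ is canonically identified with~$V_j$ and whose incidence matrix between levels $j$ and $j + 1$ coincides with the basic matrix $B(V_j, V_{j+1})$. Combining this with Proposition~\ref{p:Bratteli} identifies $G(\Omega_Q, T_Q)$ as a unital ordered group with
$$ \varinjlim \left( \left( \ZZ^{V_j}, (\ZZ^{V_j})^+ \right), B(V_j, V_{j+1})^T \right)_{j = 1}^\infty $$
equipped with the order unit of Proposition~\ref{p:infinitely generated}, which in turn is isomorphic to $(\Gamma, \Gamma \cap \RR^+, 1)$.

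The main obstacle is precisely the combinatorial identification of the transition matrices of Bruin's Bratteli--Vershik representation of $(\Omega_Q, T_Q)$ with the basic matrices $B(V_j, V_{j+1})$. Concretely, one has to view the vertices at level~$j$ as labelled blocks of length~$S_j$ occurring in the symbolic coding of $(\Omega_Q, T_Q)$ and verify that the decomposition of a length-$S_{j+1}$ block into length-$S_j$ blocks matches exactly the column structure of $B(V_j, V_{j+1})$: each new vertex $k \in V_{j+1} \setminus V_j$ corresponds to a single copy of the block indexed by the minimum~$j$ of~$V_j$, the vertex $j + 1 \in V_{j+1}$ corresponds to the concatenation of the blocks indexed by~$j$ and~$j + 1$ in~$V_j$, and every other vertex of~$V_j$ persists unchanged at level $j + 1$. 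Once this translation between Bruin's combinatorial data and the data of Proposition~\ref{p:infinitely generated} is in place, the rest of the argument is automatic.
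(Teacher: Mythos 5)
Your proposal is correct and follows essentially the same route as the paper: define $Q(k)$ via the first level $j$ with $k \in V_{j+1}$, verify that $Q$ is increasing modulo intervals and that $Q(k) \le k-2$ using Properties 1 and 2 of Proposition~\ref{p:infinitely generated}, and then identify the incidence matrices of Bruin's Bratteli--Vershik model of $(\Omega_Q, T_Q)$ with the basic matrices $B(V_j, V_{j+1})$ before concluding via Proposition~\ref{p:Bratteli}. The combinatorial identification you flag as the main obstacle is handled in the paper by the explicit description of the diagram $B_Q$ (whose level-$j$ vertex set is $\{k \ge j : Q(k-1) \le j-2\}$, so that $Q^{-1}(j) = V_{j+1}\setminus V_j$) together with Theorem~\ref{t:reduction to Bratteli}, and your description of the column structure matches it exactly.
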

To prove this theorem, let~$Q : \NN \cup \{ 0 \} \to \NN \cup \{ 0 \}$ be a map such that~$Q(0) = Q(1) = 0$, such that for each integer~$k \ge 2$ we have~$Q(k) \le k - 2$ and such that~$Q(k) \to \infty$ as $k \to \infty$.
We define an ordered Bratteli diagram $B_Q \= (V, E, \le)$, that was introduced by Bruin in~\cite[\S4]{Bru03}, as follows:
\begin{itemize}
\item
$V_0 \= \{ 0 \}$, $V_1 \= \{ k \in \NN : Q(k) = 0 \}$ and for $j \ge 2$,
$$ V_j \= \{ k \in \NN : k \ge j, Q(k-1) \le j - 2 \}; $$
\item
for an integer $j \in \NN$,
$$
E_j \= \{ j - 1 \to j \} \cup \{ j - 1 \to k : k \in V_{j} \setminus V_{j - 1} \}
\cup
\{ k \to k : k \in V_{j - 1} \setminus \{ j - 1 \} \}.
$$
\end{itemize}
Note that for every $j \in \NN$, we have~$j, j + 1 \in V_j$ and each vertex in~$V_j$ different from~$j$ has at most one edge arriving at it.
When~$j \ge 2$ the only edges arriving to $j \in V_j$ are $\{j - 1 \to j\}, \{j \to j \} \in E_j$.
So to define the partial order~$\ge$, we just have to define it, for each $j \ge 2$, between $\{j - 1 \to j \} \in E_{j}$ and $\{ j \to j \} \in E_{j}$: we put $\{ j - 1 \to j \} < \{ j \to j \}$.
The rest of the edges are maximal and minimal at the same time.

It is straight forward to check that the set~$X_{B_Q}$ is a Cantor set and that the infinite path $0 \to 1 \to 2 \to \cdots $ is the unique minimal path in~$B_Q$.
Therefore there is a well defined map $V_{B_Q} : X_{B_Q} \to X_{B_Q}$, see~\S\ref{ss:Bratteli diagrams}.
\begin{theorem}[\cite{Bru03}, Proposition~2]
\label{t:reduction to Bratteli}
Let~$Q : \NN \cup \{ 0 \} \to \NN \cup \{ 0 \}$ be a map such that~$Q(0) = Q(1) = 0$, such that for each integer~$k \ge 2$ we have~$Q(k) \le k - 2$ and such that~$Q(k) \to \infty$ as $k \to \infty$.
Consider the corresponding Bratteli-Vershik system $(X_{B_Q}, V_{B_Q})$ and generalized odometer $(\Omega_Q, T_Q)$ defined above.
Then there is a homeomorphism between~$X_{B_Q}$ and~$\Omega_Q$ that conjugates the action of~$V_{B_Q}$ on $X_{B_Q}$ to the action of~$T_Q$ on~$\Omega_Q$.
\end{theorem}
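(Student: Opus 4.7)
The plan is to construct an explicit bijection $\Phi : X_{B_Q} \to \Omega_Q$ and verify that it is a homeomorphism conjugating $V_{B_Q}$ to $T_Q$.

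\textbf{Structure of paths.} First I would observe that for every $j \ge 1$ and every vertex $k \in V_j$ with $k > j$, the only edge of $E_{j+1}$ starting at~$k$ is the loop $\{k \to k\}$. Consequently any infinite path $x = e_1 e_2 \cdots \in X_{B_Q}$ branches only at the ``diagonal vertices'' $j \in V_j$, and $x$ is uniquely determined by the strictly increasing sequence $0 = k_0 < k_1 < k_2 < \cdots$ of levels at which it visits the diagonal. A direct inspection of $V_{j+1} \setminus V_j$ shows that the admissible transitions from $k_{i-1}$ to some $k_i > k_{i-1} + 1$ are exactly those satisfying $Q(k_i - 1) = k_{i-1} - 1$.

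\textbf{Definition of $\Phi$.} I would then define $\Phi(x) = (y_k)_{k \ge 0}$ by declaring $y_k = 1$ if and only if the edge $e_{k+2}$ is the loop $\{(k+2) \to (k+2)\}$; this loop belongs to $E_{k+2}$ for every $k \ge 0$ thanks to the assumptions $Q(0) = Q(1) = 0$ and $Q(k) \le k - 2$ for $k \ge 2$. Under this encoding, $y_k = 1$ precisely when the path is at vertex $k+2$ at level $k+1$ (above the diagonal) and reaches the diagonal at level $k+2$, i.e.\ when $k + 2 = k_i$ for some $i$ with $k_{i-1} \le k$. Combining this with the admissibility condition established above, one checks that $\Phi(x)$ satisfies the block-avoidance rule $y_k = 1 \Rightarrow y_j = 0$ for every $j \in [Q(k+1), k-1]$, so $\Phi(x) \in \Omega_Q$. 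Conversely any $y \in \Omega_Q$ uniquely determines an admissible branching sequence, hence a path in $X_{B_Q}$, so $\Phi$ is a bijection.

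\textbf{Homeomorphism and conjugacy.} Since $y_k$ depends only on $e_{k+2}$, the map $\Phi$ sends cylinders to cylinders and is a homeomorphism. To verify $\Phi \circ V_{B_Q} = T_Q \circ \Phi$ I would restrict to the dense subset of paths $\Phi^{-1}(\langle n \rangle)$ with $n \in \NN \cup \{0\}$, where both maps are given by elementary combinatorial rules: the Vershik map replaces the smallest non-maximal edge of $x$ by its successor and resets the lower edges to the minimal path, while $T_Q$ implements ``add $1$'' in the numeration scale $(S_k)_{k \ge 0}$ satisfying $S_k = S_{k-1} + S_{Q(k)}$. Tracing $\Phi$ through these two operations shows that they coincide, and the identity extends to all of $X_{B_Q}$ by continuity.

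The main obstacle will be the final conjugacy check: matching the carry rule of $V_{B_Q}$ with that of $T_Q$ requires careful bookkeeping of how the minimal-path reset of $V_{B_Q}$ rewrites the branching sequence, and of how a single increment in the numeration scale produces the cascading replacement of a block of digits. Once this dictionary is set up, the verification reduces to an induction on $n$ using the defining recursion for $(S_k)$.
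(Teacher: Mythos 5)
The paper offers no argument for this statement at all: it is imported verbatim from Bruin \cite{Bru03}, Proposition~2, so there is nothing internal to compare your proof against. That said, your plan is in substance the standard (and Bruin's) argument: a path carries information only at the diagonal vertices $j\in V_j$, one encodes that information as a $0$--$1$ sequence, and one checks the conjugacy on the dense forward orbit of the minimal path. Your encoding is the correct one. Indeed, the number of finite paths from $v_0$ to the diagonal vertex $j\in V_j$ equals $S_{j-1}$ (by induction, using that vertex $j$ enters the diagram at level $Q(j-1)+2$ together with the recursion $S_j=S_{j-1}+S_{Q(j)}$), so the rank of an initial segment $e_1\cdots e_j$ in the Vershik order is $\sum_{m\,:\,e_m=\{m\to m\}}S_{m-2}=\sum_k y_kS_k$ with your $y_k$, which is exactly what identifies $\Phi(V_{B_Q}^n(x_{\min}))$ with $\expansion{n}$.

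Two points keep this from being a complete proof. First, your transition rule ``$k_i>k_{i-1}+1$ is admissible iff $Q(k_i-1)=k_{i-1}-1$'' fails for branchings out of $v_0$: the level $V_1=\{k:Q(k)=0\}$ is defined differently from $V_j$, $j\ge 2$, and the correct condition there is $Q(k_1)=0$, not $Q(k_1-1)=-1$. The whole bottom of the diagram needs separate treatment, and one quietly uses that $Q^{-1}(0)$ is an initial segment of $\NN\cup\{0\}$ (true for the maps $Q$ this theorem is applied to, but not among the stated hypotheses) so that each vertex occupies a contiguous range of levels. Second, the two computations you defer are the entire content of the proposition: (i) that $y_k=1$ forces the path off the diagonal precisely at levels $Q(k+1)+2,\dots,k+1$, which is what makes $\Phi(x)$ land in $\Omega_Q$ and, via disjointness of these blocks, makes $\Phi$ surjective; and (ii) the path-counting identity above, which is the only place the recursion defining $(S_k)_{k\ge 0}$ enters and hence the only possible source of the conjugacy with ``add one.'' As written these are announced rather than carried out, so the proposal is a correct blueprint with the right combinatorial dictionary, but the decisive verifications still have to be done.
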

\begin{proof}[Proof of Theorem~\ref{t:almost-fullness}]
Let~$(q_n)_{n = 1}^\infty$ and~$(V_j)_{j = 1}^\infty$ be given by Proposition~\ref{p:infinitely generated} and put~$q_0 = 0$ and $V_0 = \{ 0 \}$.
By Part~2 of Proposition~\ref{p:infinitely generated} for each~$j \in \NN$ the least element of~$V_j$ is~$j$.
So for each~$k \in \NN$ there is at least one and at most finitely many integers $j \in \NN$ such that~$V_j$ contains~$k$.
Put~$Q(0) = 0$ and for~$k \in \NN$ let~$Q(k)$ be the smallest integer~$j \ge 0$ such that $k \in V_{j + 1}$.
Let us prove that the map~$Q : \NN \cup \{ 0 \} \to \NN \cup \{ 0 \}$ so defined is increasing modulo intervals.
By definition
$$ Q([q_1, q_2 - 1]) = Q([1, 2]) = \{ 0 \} = [q_0, q_1 - 1]. $$
Let~$n \ge 2$ be an integer and~$k \in [q_n, q_{n + 1} - 1]$.
By definition of~$j \= Q(k)$ we have~$k \in V_{j + 1} \setminus V_j$.
Part~1 of Proposition~\ref{p:infinitely generated} then implies that $V_{j + 1} \setminus V_j \subset [q_n, q_{n + 1} - 1]$ and that $Q(k) = j \in [q_{n - 1}, q_n - 1]$.
This completes the proof that~$Q$ is increasing modulo intervals.
We thus have~$Q(k) \to \infty$ as~$k \to \infty$.
Note that by Part~2 of Proposition~\ref{p:infinitely generated} for every~$j \in \NN$ we have~$V_{j} \setminus \{ j \} \subset V_{j + 1}$.
So by definition of~$Q$ for every~$j \in \NN$ we have~$Q^{-1}(j) = V_{j + 1} \setminus V_j$.
Since for each~$j \in \NN$ the minimal element of~$V_j$ is~$j$ and~$j + 1 \in V_j$, it follows that for every integer~$k \ge 2$ we have~$Q(k) \le k - 2$.
Furthermore, $(V_j)_{j = 0}^\infty$ coincides with the definition of the vertex set of the ordered Bratteli diagram~$B_Q$ defined above and for each~$j \in \NN$ the $j$-th transition matrix of~$B_Q$ is precisely $B(V_j, V_{j + 1})$.
Moreover, the~$0$-th transition matrix is equal to~$\bigl( \begin{smallmatrix} 1 & 1 \end{smallmatrix} \bigr)$.
In view of Theorem~\ref{t:reduction to Bratteli} and the remarks above, it follows that~$\Omega_Q$ is a Cantor set and that the last desired assertion is a direct consequence of Proposition~\ref{p:Bratteli} and Proposition~\ref{p:infinitely generated}.
\end{proof}
\subsection{From the generalized odometer to the post-critical set of a logistic map}
\label{ss:logistic}
Fix~$\parameter \in (0, 4]$ and let~$f_\parameter$ be the corresponding logistic map as defined in the introduction.
We assume that~$\parameter$ is sufficiently close to~$4$ so that~$f_\parameter^2(1/2) < 1/2 < f_\parameter(1/2)$.
Put~$c_0 = 1/2$ and for each integer $n \ge 1$ put $c_n = f_\parameter^n(c_0)$.
Define the sequence of compact intervals $(D_n)_{n = 1}^\infty$ inductively by $D_1 = [c_0, c_1]$ and for $n \ge 2$, by
$$ D_{n} =
\begin{cases}
f_\parameter(D_{n - 1}) & \text{if } c_0 \not \in D_{n - 1}; \\
[c_{n}, c_1] & \text{otherwise}.
\end{cases} $$
An integer $n \ge 1$ is called a \textit{cutting time} if $c_0 \in D_n$. 
We denote by $(S_k)_{k =0}^\infty$ the strictly increasing sequence of all cutting times.
From our assumption that $f_\parameter^2(c_0) < c_0 < f_\parameter(c_0)$ it follows that $S_0 = 1$ and $S_1 = 2$.

It can be shown that if~$S$ and~$S' \ge S + 1$ are consecutive cutting
times, then $S' - S$ is again a cutting time, which is less than or equal to~$S$ when~$f_\parameter$ has no periodic attractors, see for example~\cite{Bru95,Hof80}.
That is, if~$f_\parameter$ has no periodic attractors then for each $k \ge 1$ there is a non-negative integer~$Q(k)$ such that $Q(k) \le k - 1$ and $S_k - S_{k - 1} = S_{Q(k)}$. Putting $Q(0) = 0$, the function $Q : \NN \cup \{ 0 \} \to \NN \cup \{ 0 \}$ so defined is called the \textit{kneading map} of~$f_\parameter$.
Note that the sequence~$(S_k)_{k = 0}^\infty$ is defined from~$Q$ in the same way as in the definition of the generalized odometer~$\Omega_Q$.

Given $x = (x_k)_{k = 0}^{\infty} \in \Omega_Q$ and an integer $n \ge 0$,
put $\sigma(x|n) = \sum_{k = 0}^nx_kS_k$.
Observe that $\sigma(x|n)$ is non-decreasing with~$n$ and when~$x$ has infinitely many~$1$'s, $\sigma(x|n) \to \infty$ as $n \to \infty$.
On the other hand, if~$x$ has at most a finite number of~$1$'s, then $\sigma(x) \= \lim_{n \to + \infty} \sigma(x|n)$ is finite and $x = \expansion{\sigma(x)}$.

For $x = (x_k)_{k =0}^\infty$ different from $\expansion{0}$ we denote
by~$q(x) \ge 0$ the least integer such that $x_{q(x)} \neq 0$.
In~\cite[Theorem~1]{BruKelStP97} it is shown that for each $x \in \Omega_Q$ with infinitely many~$1$'s the sequence of intervals $(D_{\sigma(x|n)})_{n = q(x)}^\infty$ is nested and that $\bigcap_{n \ge q(x)} D_{\sigma(x|n)}$ is reduced to a point belonging to the \pcs~$X_{\parameter}$ of~$f_\parameter$.
Furthermore, if we denote this point by~$\pi(x)$ and for an integer $n \ge 0$ we put $\pi(\expansion{n}) = f_\parameter^n(c)$, then the map
$$ \pi : \Omega_Q \to X_{\parameter} $$
so defined is continuous and conjugates the action of~$T_Q$ on~$\Omega_Q$, to the action of~$f_\parameter$ on~$X_{\parameter}$.
\begin{proposition}
\label{p:projection-to-pcs}
Let $Q:\NN \cup \{ 0 \} \to \NN \cup \{ 0 \}$ be non-decreasing modulo intervals, such that~$Q(0) = Q(1) = 0$ and such that for every integer~$k \ge 2$ we have $Q(k) \leq k - 2$.
Then there is $\parameter \in (0, 4]$ such that~$Q$ is the kneading map of~$f_\parameter$ and the inverse of the factor map $\pi:\Omega_Q\to X_{\parameter}$ is defined on the complement of
$$ \mathcal{O}_{\parameter}
\=
\bigcup_{n = 0}^\infty f_{\parameter}^{-n}(1/2) $$
in~$X_{\parameter}$.
In particular~$\pi$ is injective on~$\Omega_Q \setminus \pi^{-1}(\mathcal{O}_{\parameter})$.
\end{proposition}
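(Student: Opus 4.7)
My plan is to establish the proposition in two stages: realizing $Q$ as the kneading map of some logistic parameter~$\parameter$, and then analyzing the fibers of the factor map~$\pi$.

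\textbf{Stage 1: Realizing $Q$ as a kneading map.} The hypotheses on~$Q$ (increasing modulo intervals, with $Q(0) = Q(1) = 0$ and $Q(k) \le k - 2$ for $k \ge 2$) should force the standard Hofbauer--Bruin admissibility condition on kneading maps. Using the explicit block structure $Q([q_n, q_{n+1}-1]) \subset [q_{n-1}, q_n-1]$, this reduces to a direct lexicographic verification on the shifted sequences. Classical kneading theory (see e.g.~\cite{Hof80,Bru95}) then produces a parameter $\parameter \in (0, 4]$ whose kneading map is~$Q$. Moreover, since $Q(k) \to \infty$, $f_\parameter$ has no periodic attractor, which guarantees that $X_\parameter$ is a Cantor set.

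\textbf{Stage 2: Injectivity off $\pi^{-1}(\mathcal{O}_\parameter)$.} For the main assertion I would suppose that $x, y \in \Omega_Q$ satisfy $\pi(x) = \pi(y) = p$ and $p \notin \mathcal{O}_\parameter$, and derive $x = y$. The initial observation is that the complement $X_\parameter \setminus \mathcal{O}_\parameter$ is forward invariant under~$f_\parameter$: since no forward iterate of $p$ equals $c_0 = 1/2$, the same is true for $f_\parameter(p)$. Combined with the semi-conjugacy $\pi \circ T_Q = f_\parameter \circ \pi$, this shows the hypothesis is preserved if we replace $(x, y)$ by $(T_Q^m x, T_Q^m y)$ for any $m \ge 0$, so a mismatch between $x$ and $y$ at any coordinate can be translated to a mismatch at coordinate~$0$. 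I would then argue that the two nested chains $(D_{\sigma(x|n)})_n$ and $(D_{\sigma(y|n)})_n$ must agree at every level: at each level the interval~$D_m$ containing~$p$ is unambiguous unless $p$ sits at a common boundary of two $D_m$'s produced by the recursion, and tracking the recursion should show that any such shared boundary point has some forward iterate equal to~$c_0$, i.e., $p \in \mathcal{O}_\parameter$, contradicting the hypothesis.

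\textbf{Main obstacle.} The technical heart of the argument is the last step. The endpoints of any $D_m$ are iterates $c_j = f_\parameter^j(c_0)$, which in the generic non-preperiodic case are \emph{not} individually in~$\mathcal{O}_\parameter$. So the right claim is not that every boundary point lies in~$\mathcal{O}_\parameter$, but the finer statement that if two admissible sequences $x \ne y$ in~$\Omega_Q$ encode the same limit $p$, then $p$ is forced to be a backward iterate of~$c_0$. To make this precise I would work inside the Bratteli--Vershik model of Theorem~\ref{t:reduction to Bratteli}: elements of~$\Omega_Q$ correspond to infinite paths in~$B_Q$, and pairs of distinct paths that project to the same point under the factor map to~$X_\parameter$ are exactly those related by a finite-length rearrangement using the order structure of~$B_Q$. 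A direct inspection of the diagram's edge structure (defined through $V_j$ and the relations $\{j-1 \to j\} < \{j \to j\}$) should show that such rearrangements translate under the factor map to identifications between the two $f_\parameter$-preimages of some point already in the backward orbit of $c_0$, placing $p$ in~$\mathcal{O}_\parameter$ as required.
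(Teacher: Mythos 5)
Your Stage~1 is exactly the paper's Part~1: reduce to the Hofbauer--Bruin admissibility condition for kneading maps and verify it using the block structure $Q([q_n, q_{n+1}-1]) \subset [q_{n-1}, q_n - 1]$. That part is fine.

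Stage~2, however, contains a genuine gap, and you have in fact located it yourself under ``Main obstacle'' without closing it. The assertion that distinct paths in $B_Q$ with the same image under $\pi$ are ``exactly those related by a finite-length rearrangement using the order structure,'' and that a ``direct inspection of the diagram's edge structure should show'' that such identifications force $p \in \mathcal{O}_\parameter$, is precisely the statement to be proved, not a tool for proving it. For a general kneading map $Q$ the factor map $\pi$ can identify points of $\Omega_Q$ lying outside the grand orbit of $\expansion{0}$ (the intervals $D_{\sigma(x|n)}$ need not shrink fast enough to separate distinct codes), so injectivity off $\pi^{-1}(\mathcal{O}_\parameter)$ is a quantitative property of $Q$, not a formal consequence of the coding. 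The paper handles this by invoking \cite[Lemma~11]{CorRiv10a}, which reduces the claim to two checkable conditions: the strengthened admissibility inequality $Q(k+1) \ge Q(Q(Q(k))+1) + 2$ for all large $k$, and a separation property asserting that for any $K>0$ and any distinct $x, x' \in \Omega_Q$ outside the grand orbit of $\expansion{0}$ there is $m$ with $\max\{q(T_Q^m(x)), q(T_Q^m(x'))\} \ge K$ and $Q(q(T_Q^m(x))+1) \ne Q(q(T_Q^m(x'))+1)$. Both are then verified combinatorially from the increasing-modulo-intervals hypothesis (the second by the argument of \cite[Lemma~17]{CorRiv10b}). Your proposal contains no analogue of either condition and no mechanism for ruling out identifications between points neither of which ever hits $c_0$; until that is supplied, Stage~2 is a plan rather than a proof. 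A smaller point: $T_Q$ is the successor map of a numeration scale, not a shift, so applying $T_Q^m$ does not ``translate a mismatch to coordinate~$0$''; the paper's separation condition is formulated through the quantity $q(T_Q^m(\cdot))$ precisely to get around this.
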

\begin{proof}
Since~$Q$ is non-increasing modulo intervals there is a strictly increasing sequence of integers~$(q_n)_{n = 0}^\infty$ such that~$q_0 = 0$, $q_1 = 1$ and such that for each~$n \in \NN$ we have~$Q([q_n, q_{n + 1} - 1]) \subset [q_{n - 1}, q_n - 1]$.

In Part~1 we prove that there is~$\parameter \in (0, 4]$ such that~$Q$ is the kneading map of~$f_\parameter$ and in Part~2 we prove that~$\pi^{-1}$ is well defined on~$X_{\parameter} \setminus \mathcal{O}_{\parameter}$.

\partn{1}
In view of~\cite{Bru95,Hof80}, to prove that there is~$\parameter \in (0, 4]$ such that~$Q$ is the kneading map of~$f_\parameter$ we just need to prove that for each $k \in \NN$ there is~$j_0 \in \NN$ such that
$$ Q(k + j_0) \ge Q(Q(Q(k)) + j_0) + 1 $$
and such that for each~$j \in \{1, \ldots, j_0 - 1 \}$
\begin{equation*}
\label{e:kneading equality}
Q(k + j) = Q(Q(Q(k)) + j).
\end{equation*}

Let~$k \in \NN$ be given.
Suppose first~$k \in [1, q_3 - 1]$, so that~$Q(k) \le q_2 - 1$ and thus~$Q(Q(k)) = 0$.
Then for each~$j \in [1, q_2 - 1]$ we have $Q(Q(Q(k)) + j) = Q(j) = 0$.
Since~$Q(k + j) \ge 0$ with strictly inequality when~$j = q_2 - 1$, this shows the desired assertion for this value of~$k$.
Suppose now $k \ge q_3$ and let~$n \ge 3$ be the integer such that $k \in [q_n, q_{n + 1} - 1]$.
Then $Q(k + 1) \ge q_{n-1}$.
On the other hand, $Q(k) \leq q_n - 1$ so $Q(Q(k)) + 1 \le q_{n-1} \le q_n - 1$ and
$$ Q(Q(Q(k)) + 1) \le q_{n - 1} - 1 \le Q(k + 1) - 1. $$
This proves the desired assertion for this value of~$k$, with~$j_0 = 1$.

\partn{2}
By~\cite[Lemma 11]{CorRiv10a}, to prove that~$\pi^{-1}$ is defined on~$X_{\parameter} \setminus \mathcal{O}_{\parameter}$, it is enough to show that for each sufficiently large integer~$k$ we have
\begin{equation}
\label{e:improved admissibility}
Q(k + 1) \ge Q(Q(Q(k)) + 1) + 2
\end{equation}
and that for each constant $K > 0$ and for every pair of distinct points $x, x'\in \Omega_Q$ that are not in the grand orbit of $\expansion{0}$, there is an integer $m$ satisfying
$$ \max\{q(T^m_Q(x)), q(T^m_Q (x'))\} \ge  K
\text{ and }
Q(q(T^m_Q(x)) + 1) \neq Q(q(T^m_Q(x')) + 1). $$
The proof of this last assertion follows the same arguments of those given in \cite[Lemma 17]{CorRiv10b}.
To finish the proof we will prove~\eqref{e:improved admissibility} for each integer~$k \ge q_3$.
Let~$n \ge 3$ be the integer such that $k \in [q_n, q_{n + 1} - 1]$.
As in Part~1 we have~$Q(k + 1) \ge q_{n - 1}$ and~$Q(Q(k)) \le q_{n - 1} - 1$.
On the other hand, $Q(k) \ge q_{n - 1}$, so~$Q(Q(k)) + 1 \ge q_{n - 2} + 1 \ge 2$ and 
$$ Q(Q(Q(k)) + 1) \le (Q(Q(k)) + 1) - 2 = Q(Q(k)) - 1 \le q_{n - 1} - 2 \le Q(k + 1) - 2. $$
This proves~\eqref{e:improved admissibility} for this value of~$k$ and completes the proof of the proposition.
\end{proof}

\begin{proof}[Proof of Theorem~\ref{t:fullness} and of Theorem~\ref{t:semi-equivalence}]
Let~$(X, T)$ be a uniquely ergodic minimal Cantor system.
Then the unital ordered group~$G(X, T)$ is a dimension group that has a unique state~$\phi$ mapping the order unit of~$G(X, T)$ to~$1$, see~\S\ref{ss:complete invariant}.
So by Lemma~\ref{l:unique state} the additive subgroup~$\Gamma \= \phi(G(X, T))$ of~$\RR$ is countable and~$\phi$ induces an isomorphism between the unital ordered group~$G(X, T) / \inf(G(X, T))$ and~$(\Gamma, \Gamma \cap \RR^+, 1)$.
Note that by definition~$\Gamma$ contains~$1$.
On the other hand~$\Gamma$ is acyclic because~$X$ is a Cantor set.

If~$\Gamma$ is not contained in~$\QQ$, then the desired assertions follow from Theorem~\ref{t:almost-fullness}, Proposition~\ref{p:projection-to-pcs} and~\cite[Theorem 2.2]{GioPutSka95}.

It remains to consider the case where~$\Gamma$ is contained in~$\QQ$.
Since~$\Gamma$ is acyclic, there exists a sequence of integers greater than or equal to two $(q_n)_{n = 1}^\infty$ such that the set
$$ \{1 / (q_1\cdots q_i) : i \in \NN \} $$
generates~$\Gamma$.
Define $Q:\NN\cup\{0\} \to \NN\cup\{0\}$ by $Q^{-1}(0) = [0,q_1-1]$ and for each~$i \in \NN$ by
$$ Q^{-1}(q_1 + \cdots + q_i)
=
[q_1+\cdots+q_{i}+1, q_1+\cdots+q_{i+1}]. $$
It is straight forward to check that the generalized odometer $(\Omega_Q,T_Q)$ corresponds to the odometer defined by the sequence $(q_n)_{n = 1}^\infty$, see for example~\cite[Lemma~8]{CorRiv10a}.
Therefore its unital ordered group is isomorphic to $(\Gamma, \Gamma\cap\RR^+,1)$, see for example~\cite{GjeJoh00}.
Since~$Q$ is non-decreasing, by~\cite{Bru95,Hof80} there is a parameter~$\parameter \in (0, 4]$ such that the kneading map of~$f_\lambda$ is equal to~$Q$.
It is well known that the \pcs{} of~$f_\lambda$ is conjugated to the odometer defined by the sequence $(q_n)_{n = 1}^\infty$.
So in this case the desired assertions follow from~\cite[Theorem~2.2]{GioPutSka95}.
\end{proof}

\bibliographystyle{alpha}

\end{document}